\theoremstyle{plain}
\newtheorem{theorem}[equation]{Theorem}
\newtheorem{lemma}[equation]{Lemma}
\newtheorem{proposition}[equation]{Proposition}
\theoremstyle{definition}
\theoremstyle{remark}
\newtheorem{remark}[equation]{Remark}
\newcommand{\dv}{\operatorname{div}}
\newcommand{\dist}{\operatorname{dist}}
\newcommand{\diam}{\operatorname{diam}}
\newcommand{\tr}{\operatorname{tr}}
\numberwithin{equation}{section}
\newcommand{\bR}{\mathbb{R}}
\providecommand{\set}[1]{\{#1\}}
\providecommand{\abs}[1]{\lvert#1\rvert}
\providecommand{\Abs}[1]{\left\lvert#1\right\rvert}
\providecommand{\bigabs}[1]{\bigl\lvert#1\bigr\rvert}
\providecommand{\norm}[1]{\lVert#1\rVert}
\renewcommand{\vec}[1]{\boldsymbol{#1}}
\def\dVert{\,\,\text{--}\kern-.46em\|}
\begin{document}
\title[Hopf-Oleinik Lemma of double divergence form equation]
{Hopf-Oleinik lemma for elliptic equations in double divergence form}

\author[H. Dong]{Hongjie Dong}
\address[H. Dong]{Division of Applied Mathematics, Brown University,
182 George Street, Providence, RI 02912, United States of America}
\email{Hongjie\_Dong@brown.edu}
\thanks{H. Dong was partially supported by the NSF under agreement DMS-2055244.}

\author[S. Kim]{Seick Kim}
\address[S. Kim]{Department of Mathematics, Yonsei University, 50 Yonsei-ro, Seodaemun-gu, Seoul 03722, Republic of Korea}
\email{kimseick@yonsei.ac.kr}
\thanks{S. Kim is supported by the National Research Foundation of Korea (NRF) under agreement NRF-2022R1A2C1003322.}

\author[B. Sirakov]{Boyan Sirakov}
\address[B. Sirakov]{Departamento de Matem\'atica, Pontificia Universidade Cat\'olica do Rio de Janeiro, Rio de Janeiro, Brazil}
\email{bsirakov@puc-rio.br}

\subjclass[2010]{Primary 35B45, 35B65,  35J08}

\keywords{}

\begin{abstract}
We establish, for the first time, a Zaremba-Hopf-Oleinik type boundary point lemma for uniformly elliptic partial differential equations in double divergence form, also known as stationary Fokker-Planck-Kolmogorov equations. As an application, we derive sharp two-sided estimates for the Green's function associated with second-order elliptic equations in non-divergence form in $C^{1,\alpha}$ domains.
\end{abstract}
\maketitle

\section{Introduction and main results}

The classical boundary point principle (BPP) states that a non-constant function $u$ which is superharmonic in a sufficiently smooth domain $\Omega$ (say a ball) and attains a minimum at $x_0\in \partial\Omega$ has a non-zero gradient (normal derivative) at $x_0$, in the sense that (denoting with $\vec \nu$ the interior normal to $\partial\Omega$)
\begin{equation}\label{hop1}
\liminf_{t\searrow0} \frac{u(x_0+t\vec \nu)-u(x_0)}{t}>0.
\end{equation}
An important alternative way of stating the BBP is to say that for superharmonic functions positivity entails a quantitative version of itself; specifically, a positive superharmonic function in a bounded $C^{1,1}$-domain $\Omega\subset \mathbb{R}^d$ is such that 
\begin{equation}\label{hop2}
u\ge c_0 \mathtt{d}\;\text{ in }\;\Omega,\; \text{ where }\;\mathtt{d}(x):=\mathrm{dist}(x,\partial\Omega),\;\;c_0= c_0(u,\Omega)>0.
\end{equation}
This ``bedrock'' (to quote the first page of the book \cite{PS}) result in the theory of elliptic PDE is most often associated with the names of Hopf \cite{Ho} and Oleinik \cite{Ol}.
Its study stretches from 1910 (\cite{Za}) to this day and remains a very active field, in particular its validity,  extensions and applications to various types of differential operators and domain geometries. In this paper we establish the framework for the validity of the Hopf-Oleinik lemma for \emph{double divergence form} equations.

The theory of linear second-order uniformly elliptic PDE has mostly considered the following types of operators 
\begin{align*}
\rm{(ND)}\qquad &L_0u=a^{ij} D_{ij}u = \tr(\mathbf{A} D^2 u)& \mbox{(non-divergence form)}%
\\
\rm{(D)}\qquad &\mathscr{L}_0 u=   D_i(a^{ij}D_ju) = \dv(\mathbf{A} D u)& \mbox{(divergence form)}
\\
\rm{(DD)}\qquad & L_0^*u=  D_{ij}(a^{ij} u) = \dv^2 (\mathbf{A} u)& \mbox{(double divergence form)}
\end{align*}
where $\mathbf{A}(x) = (a^{ij}(x))_{i,j=1}^d$ is such that $0<\lambda \mathbf{I} \le \mathbf{A} \le \Lambda \mathbf{I}$ on $\Omega\subset\mathbb{R}^d$, as well as more general operators with lower-order terms, as below.
Note that for symmetric $\mathbf{A}$ the operator (D) is {\it formally self-adjoint}, while (DD) is the {\it formal adjoint of} (ND).

There is a huge body of work on non-divergence and divergence form equations, which constitutes the very fundament of modern theory of elliptic and parabolic PDE. The literature on double divergence (DD) form equations, while quite large per se, is relatively small compared to that concerning (ND) and (D).
Apart from being instrumental in the study of (ND) as adjoints, (DD) include the important stationary Kolmogorov equation for invariant measures of diffusion processes (\cite{BKRS15}), and play a basic role in mean-field games theory since one of the equations in a mean-field system is of Fokker-Planck-Kolmogorov (FPK) type (\cite{GPV16}, \cite{CP20}).
Further applications of (DD) include homogenization (\cite{BBL18}) and more generally theory of nonlinear FPK equations (\cite{BRS19}).
We refer to the monograph \cite{BKRS15} for an extensive review and a lot of references on the theory of double divergence (FPK) equations. 

Classical works on (DD) include their basic axiomatic treatment in \cite{He}, \cite{Sjogren75}, the validity of the strong maximum principle in \cite{Li}, and the H\"older regularity of distributional solutions of equations with H\"older continuous coefficients $\mathbf{A}(x)$, proved in \cite{Sjogren73}.
The importance of studying (DD) in order to understand properties of its adjoint (ND) was further recognized in the influential works \cite{Ba1} and \cite{FS84}.

The regularity theory of double divergence form equations has reached certain maturity only recently.
Let us immediately note the principal difference between (DD) and the two other types of equations -- there is no gain of regularity for solutions of (DD) compared with the regularity of $\mathbf{A}$; a standard example is the one-dimensional equation $(a(x)u)^{\prime\prime}=0$, the solution $u=1/a$ is no more regular than $a$ itself.
Another fundamental difference between (DD) and (ND)-(D) is that non-zero constants are not solutions of $L^*u=0$, that is, adding a constant to a solution of (DD) does not yield a solution.
This precludes transferring to (DD) many known techniques from the study of (ND)-(D).
Also, standard a priori estimates such as the generalized maximum principle (ABP inequality, \cite[Theorem 9.1]{GT}) are not available for (DD). 

Regularity theory for (DD) studies for instance the question whether and under what conditions on the coefficients measure solutions of (DD) are locally integrable functions, and whether these functions have higher integrability, or are bounded and even have (H\"older) continuous representatives.
For the existence and integrability theory of solutions of double divergence form equations we refer to \cite{BKRS15}, \cite{BS17}, \cite{BRS23}, as well as the references in these works. 

Since the boundary point principle concerns continuous functions, we next review results on continuity and differentiability of solutions of double divergence form equations.
First, if $\mathbf{A}$ is Lipschitz, the equation can be differentiated into a divergence form one, so the well-known divergence theory applies.
Thus (DD) stands on its own only for less regular matrices $\mathbf{A}$.
It was proved in \cite{Sjogren73}, \cite{Ba2} that if $\mathbf{A}$ is H\"older continuous then solutions of $L_0^*u=0$ are H\"older continuous too, while mere (uniform) continuity of $\mathbf{A}$ may not suffice to guarantee that all solutions are continuous functions.
An extension to equations with only Dini continuous $\mathbf{A}$ and sufficiently integrable lower-order coefficients was obtained in \cite{Sjogren73}, \cite{BS17}.
The most general to date result was obtained in \cite{DK17}, \cite{DEK18}, where it is shown that solutions of 
\begin{equation}		\label{mostgeneq}
L^*u=\dv^2 (\mathbf{A} u)-\dv(\vec bu)+cu = \dv^2 \mathbf f+\dv \vec g+h
\end{equation}
are continuous provided $\mathbf{A}$, $\mathbf{f}$ have Dini mean oscillation (this is a weaker condition than Dini continuity), $\vec b, \vec g\in L^{p_0}(\Omega)$, $c, h\in L^{p_0/2}(\Omega)$ for some $p_0>d$ (it is well known that this Lebesgue integrability assumption cannot be improved even if the second-order operator is the Laplacian).
The result in \cite{DEK18} is also up-to-the-boundary if $\Omega$ is $C^{1,1}$ smooth, that is, the solution of the appropriate boundary-value problem $u\in C(\overline{\Omega})$, with modulus of continuity controlled in particular by the Dini mean oscillation of $\mathbf{A}$.
Further, it was recently observed in \cite{LPS20} that the zero set of a solution to $L_0^*u=0$ has stronger regularity than expected, namely, for a H\"older continuous $\mathbf{A}$ a solution is H\"older up to any exponent less than one around a point where it vanishes.
This result was significantly strengthened in \cite{CDKK24}, where it was established that solutions are actually differentiable at such points, with a corresponding boundary result also holding.
Furthermore, \cite{CDKK24} considers the full equation \eqref{mostgeneq} in a $C^{1,\alpha}$-domain, under the aforementioned general assumptions on the coefficients.
Additionally, we note that Harnack inequality for nonnegative solutions of $L^*u=0$ has been proven in \cite{DEK18} and \cite{BRS23} (see also \cite{GK24}). 
\smallskip

Next we turn to the boundary point principle.
A very complete survey on the long and surprisingly rich history of this result for non-divergence and divergence form equations can be found in \cite{AN3} (we refer also to \cite{AM},  \cite{AN2}, \cite{AN3}, \cite{BM}, \cite{FG},  \cite{K1}, \cite{LXZ}, \cite{N1}, \cite{S1}, \cite{S2} for conditions which ensure the validity of the BPP, as well as more historical reviews and a large body of references).
In particular, the BPP is always valid for supersolutions of non-divergence form equations such as $L_0 u=0$ but this is not so for its divergence counterpart $\mathscr{L}_0 u=0$ -- for the latter BPP may fail even if $\mathbf{A}$ is continuous, however it holds if $\mathbf{A}$ has Dini mean oscillation (\cite{RSS23}).
For recent quantitative results see \cite{Sir1}, \cite{Sir3}, \cite{DJV24}, \cite{TL24}, \cite{DJ25}. Furthermore, we observe that in the last ten years the BPP has been intensively studied  in the framework of nonlocal equations modeled on the fractional Laplacian - and in that setting the BPP has confirmed its capacity to generate various and unexpected results (see \cite{DSV24}, as well as \cite{Ro16}, \cite{SV19}, \cite{CDP23}, \cite{OS25} and the references there).

As for double divergence form equations, differently from the other types, it is obvious that the BPP as stated in \eqref{hop1} fails.
Indeed, in the same one-dimensional example as above $(a(x)u)''=0$,  $u=1/a$, one can take a smooth $a$, $a(x)\in[\lambda,\Lambda]$, which attains a maximum at an endpoint of the domain and whose derivative vanishes at that point.
On the other hand, this counterexample cannot be used to disprove \eqref{hop2}; and for a double divergence form equation it is not obvious that \eqref{hop1} and \eqref{hop2} are equivalent, because the set of solutions is not invariant with respect to addition of a constant.
What we are going to prove in this paper is that \eqref{hop2} does hold for (DD), that is, for double divergence form equations positivity still implies a quantitative version of itself. 
\smallskip

We now give the precise assumptions and state our main results. 
We consider the elliptic operator $L^*$ of the form
\[
L^* u= D_{ij}(a^{ij}u) - D_i(b^i u)+cu=\dv^2 (\mathbf A u)-\dv(\vec bu)+cu.
\]
The operator $L^*$ is the formal adjoint of the elliptic operator in non-divergence form $L$ given by
\[
Lv= a^{ij} D_{ij}v +  b^i D_i  v +cv=\tr(\mathbf{A} D^2 v)+ \vec b \cdot Dv +cv.
\]

We say that $u$ is a continuous $L^*$-supersolution in $\Omega$ if it is continuous in $\Omega$ and satisfies the following condition:
For any subdomain $\Omega_0 \subset \Omega$ with a smooth boundary and any function $v \in W^{2,1}(\Omega_0) \cap W^{1,1}_0(\Omega_0)$ with $v \ge 0$, we have
\begin{equation}			\label{eq0710thu}
\int_{\Omega_0} u Lv  \le \int_{\partial \Omega_0} (Dv\cdot \vec n) u a^{ij}n_i n_j.
\end{equation}
To justify this definition, we formally integrate by parts in $\Omega_0$, using $v=0$ and  $Dv=(Dv\cdot \vec n)\vec n$ on $\partial \Omega_0$, to obtain
\[
\int_{\Omega_0} u Lv=\int_{\partial \Omega_0} (Dv\cdot \vec n) u a^{ij}n_i n_j +\int_{\Omega_0} v L^* u.
\]
Thus, condition \eqref{eq0710thu} formally implies that $L^*u \le 0$ in $\Omega_0$.

We assume that the coefficients $\mathbf{A}:=(a^{ij})$ are such that for some
constants $0<\lambda\le \Lambda$ 
\begin{equation}					\label{ellipticity-nd}
a^{ij}=a^{ji},\quad \lambda \abs{\xi}^2 \le a^{ij}(x) \xi^i \xi^j \le \Lambda \abs{\xi}^2,\quad x\in\Omega,\;\; \xi\in\mathbb{R}^d,\quad \mbox{ and}
\end{equation}
\begin{equation}					\label{DiniMean}
\mathbf{A}\mbox{ has Dini mean oscillation}.
\end{equation}
In addition we assume that the lower order coefficients $\vec b$ and $c$ are such that
\begin{equation}			\label{cond_lower}
\vec b \in L^{p_0}(\Omega) \;\text{ and }\;c \in L^{p_0/2}(\Omega)\;\text{ for some }p_0>d.
\end{equation}

We recall  that a function $f$ defined on $\Omega$ has Dini mean oscillation, and we write $f \in \mathrm{DMO}(\Omega)$ if the mean oscillation function $\omega_f: \mathbb{R}_+ \to \mathbb{R}$, defined by
\[
\omega_f(r):=\sup_{x\in \Omega} \fint_{\Omega \cap B_r(x)} \,\abs{f(y)-(f)_{\Omega \cap B_r(x)}}\,dy, \;\; \text{where }(f)_{\Omega \cap B_r(x)}=\fint_{\Omega \cap B_r(x)} f,
\]
is a Dini function. That is, it satisfies the Dini condition
\[
\int_0^1 \frac{\omega_f(t)}t \,dt <+\infty.
\]
It is clear that a Dini continuous function has Dini mean oscillation.
However the contrary is not true, the DMO condition is weaker than Dini continuity - see \cite{DK17}, \cite{MMcO2022} for examples.
Note that the DMO condition implies uniform continuity with modulus governed by $\omega_f$, see the appendix of \cite{HK20}. 
We also observe that our assumptions ensure that the function
\begin{equation}				\label{omega_coef}
\omega_{\rm coef}(r):=  \omega_{\mathbf A}(r)+  r\sup_{x \in \Omega} \fint_{\Omega \cap B_r(x)}\abs{\vec b} +r^2\sup_{x \in \Omega} \fint_{\Omega \cap B_r(x)}\abs{c}
\end{equation}
satisfies the Dini condition.

\begin{theorem}	\label{maintheorem}
Assume that $\Omega$ is a bounded $C^{1,\alpha}$ domain for some $\alpha>0$ and that $L^*$ satisfies \eqref{ellipticity-nd} -- \eqref{cond_lower} in $\Omega$.
Suppose $u \ge 0$ is a continuous $L^*$-supersolution in $\Omega$ that is not identically zero.
Then $u>0$ in $\Omega$.
Moreover, if there exists a point $x_0\in \partial \Omega$ such that $u(x_0)=0$, then
\[
\liminf_{t\searrow0}\, \frac{u(x_0+t\vec \nu)}{t}>0,
\]
where $\vec \nu$ denotes the interior unit normal to $\partial\Omega$ at $x_0$.
In particular, property \eqref{hop2} holds. 
\end{theorem}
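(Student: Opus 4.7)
The plan is to split the argument into an interior strong maximum principle yielding $u>0$ throughout $\Omega$ and a boundary gradient estimate at $x_0$; the global bound \eqref{hop2} will then be a routine compactness consequence. For the interior step I would invoke the Harnack inequality for nonnegative continuous $L^\ast$-supersolutions under the hypotheses \eqref{ellipticity-nd}--\eqref{cond_lower}, established in \cite{DEK18, BRS23}. Since $u$ is continuous and nonnegative, the zero set $\{u=0\}\cap\Omega$ is closed; if it contained an interior point $y$, Harnack on any small ball $B_r(y)\Subset\Omega$ would force $\sup u\le C\inf u=0$ there, so the zero set would also be open, and connectedness of $\Omega$ would then give $u\equiv 0$, contradicting the hypothesis.

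For the boundary estimate I would first $C^{1,\alpha}$-flatten $\partial\Omega$ near $x_0$ so that, in new coordinates, $x_0=0$, $\vec\nu=e_d$, and $\Omega\cap B_R$ coincides with the upper half-ball $B_R^+$; the transformed operator remains of the form $L^\ast$ with leading matrix in $\mathrm{DMO}$ and lower-order coefficients in the same Lebesgue spaces (one checks that the $C^{1,\alpha}$ pullback preserves $\omega_{\rm coef}$ up to a multiplicative constant). The heart of the argument is a barrier construction. I would freeze the leading matrix at the origin, $\mathbf A_0:=\mathbf A(0)$; since $\mathbf A_0$ is constant and symmetric, the adjoint $L_0^\ast$ coincides with the non-divergence operator $L_0=a_0^{ij}D_{ij}$, and a classical explicit Hopf barrier $\phi$ (for instance $\phi(x)=e^{-\gamma|x-y_0|^2}-e^{-\gamma R^2}$ on an interior-tangent annulus centered at a point $y_0$ slightly above the flat piece) is an $L_0^\ast$-subsolution with $\phi(0)=0$ and $\phi(x)\ge cx_d$ for $x$ near $0$. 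The DMO hypothesis on $\mathbf A$ and the integrability of $\vec b,c$ let me bound the commutator $(L^\ast-L_0^\ast)\phi$ by a quantity with a summable modulus, so that after multiplying $\phi$ by a sufficiently small $\varepsilon>0$ and shrinking $R$, $\varepsilon\phi$ becomes a genuine $L^\ast$-subsolution in $B_R^+$ in the sense dual to \eqref{eq0710thu}. The interior step already provides $u\ge\varepsilon\phi$ on $\partial B_R^+$, so a maximum principle for $L^\ast$-supersolutions in the DMO setting (a consequence of the theory of \cite{DEK18, CDKK24}) gives $u\ge\varepsilon\phi$ in $B_R^+$, hence the Hopf bound at $x_0$. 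The global statement \eqref{hop2} is obtained by covering the compact set $\{u=0\}\cap\partial\Omega$ by finitely many such half-balls and combining the local Hopf bounds with the positive infimum of $u$ away from this zero set.

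The main obstacle I anticipate is making the comparison fully rigorous, because the class of admissible test functions in \eqref{eq0710thu} is $W^{2,1}\cap W^{1,1}_0$ with $v\ge 0$, and extracting $u\ge\varepsilon\phi$ from that inequality requires building such $v$ carefully (for example, as solutions of auxiliary Dirichlet problems for $L$) and tracking the boundary-term contributions, which in turn forces attention to the regularity of $\phi$ across the flat portion of $\partial B_R^+$. A natural fallback, should the direct comparison prove awkward in the DMO regime, is a blow-up argument based on the $C^1$ boundary regularity of $L^\ast$-solutions at zero points proved in \cite{CDKK24}: renormalizing $u$ at $x_0$ by $M_k:=\sup_{B_{r_k}^+}u$, compactness of the rescalings would produce a nonnegative solution $u_\infty$ of the constant-coefficient equation $L_0 u_\infty=0$ in a half-space with $u_\infty(0)=0$, $D_d u_\infty(0)=0$, and $\sup_{B_1^+}u_\infty=1$, contradicting the Liouville-type classification that forces such solutions to be of the form $\alpha x_d$.
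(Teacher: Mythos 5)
Your interior step has a citation gap but is fixable; your boundary step has a fatal one. For the interior argument, the Harnack inequality in \cite{DEK18}, \cite{BRS23}, \cite{GK24} is proved for nonnegative \emph{solutions} of $L^*u=0$, not for continuous $L^*$-supersolutions, so you cannot apply it to $u$ directly; the paper circumvents this by solving $L^*w=0$ in a ball with boundary data $u$, using the comparison principle (tested against solutions of $Lv=g\le 0$) to get $0\le w\le u$, and applying Harnack to $w$. The serious problem is the barrier construction. For a smooth barrier $\phi$ the quantity $(L^*-L_0^*)\phi=\dv^2\bigl((\mathbf A-\mathbf A_0)\phi\bigr)-\dv(\vec b\,\phi)+c\phi$ is \emph{not} a function that can be ``bounded by a quantity with a summable modulus'': since $\mathbf A$ is merely DMO (not $W^{1,1}$, let alone $W^{2,1}$), the leading term is a genuine distribution of order two with no sign and no smallness in any norm that the strict subsolution property of $\phi$ for the frozen operator could absorb. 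Equivalently, in the dual formulation one must control $\int \phi\,(a^{ij}-a_0^{ij})D_{ij}v$ for all admissible $v\ge 0$, which requires $\|D^2v\|_{L^1}$, and this is not dominated by the frozen-coefficient term $\int\phi\,L_0v$ with a sign. This is precisely the structural obstruction to transferring (ND)/(D) barrier arguments to double divergence form (constants are not solutions, $L^*\phi$ has no pointwise meaning), and it is why the paper does not use a barrier at all: its core is Proposition \ref{prop01}, proved by a Campanato-type iteration combining the weak type-$(1,1)$ estimate (Lemma \ref{lem01}), the doubling property, the local maximum estimate $\|u\|_{L^\infty(B_r^+)}\le C\fint_{B_{2r}^+}u$, and the boundary Harnack inequality for the frozen \emph{divergence-form} equation satisfied by the regular part $v$, yielding differentiability of $u$ at the boundary zero together with the quantitative bound $D_du(0)\ge \frac{C}{r_0}\fint_{B_{r_0}^+}u$ and the uniform estimate \eqref{eq_bdry_hnk}.

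Your fallback blow-up argument does not repair this, because it presupposes exactly what is at stake: compactness of the rescalings $u(r_kx)/M_k$ with a nondegenerate limit requires uniform scale-invariant estimates (doubling, $\sup_{B_r^+}u\le C\fint_{B_r^+}u$ uniformly in $r$, and control of $r^{-1}\|u\|_{L^\infty(B_r^+)}$ at all small scales), and the conclusion $D_du_\infty(0)=0$ requires $C^1$ convergence at the origin; these are the very estimates established in Steps 1--6 of the paper's proof (Lemma \ref{lem_dp}, Lemma \ref{lem1548sun}, \eqref{eq1555sun}, \eqref{eq2221sun}), so the fallback as sketched is circular. Relatedly, your derivation of \eqref{hop2} needs the local Hopf bounds to be \emph{uniform} over the boundary; a pointwise $\liminf$ at each boundary zero does not suffice, whereas the paper's \eqref{eq_bdry_hnk} (with constants depending only on $d$, $\lambda$, $\Lambda$, $\omega_{\rm coef}$) provides exactly this uniformity.
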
 

We stress that Theorem \ref{maintheorem} is completely new for double divergence form equations, even in the simplest form $L_0^* u=0$ with $\mathbf{A} \in C^\alpha$, $\alpha \in (0,1)$.
Nevertheless, our assumptions in Theorem \ref{maintheorem} on the equation and the domain are close to optimal.
Indeed, it is known that even for $L_0=\Delta$ the Lebesgue integrability in \eqref{cond_lower} cannot be improved, and also that the BPP does not hold in $C^1$ (as opposed to $C^{1,\alpha}$) domains.
In Section \ref{counter_ex}, we will give an example which shows that the DMO condition on $\mathbf{A}$ cannot be replaced by mere uniform continuity in Theorem \ref{maintheorem}.

The proof of Theorem \ref{maintheorem} relies on a (subtle) iteration argument and makes important use of the machinery developed in \cite{CDKK24}.

In the end we give an application of the above result to pointwise bounds for the Green function of a uniformly elliptic operator in non-divergence (ND) form.
We denote $a\wedge b=\min\{a,b\}$.

\begin{theorem}	\label{GreenEstimate}
Assume that $d\ge3$, $\Omega$ is a bounded $C^{1,\alpha}$ domain for some $\alpha>0$, and that the coefficients of $L$ satisfy  \eqref{ellipticity-nd} -- \eqref{cond_lower} in $\Omega$.
Additionally, suppose that $c \le 0$ in $\Omega$.
The Green's function $G(x,y)$ of the operator $L$ satisfies the bounds
\begin{multline}				\label{eq1850mon}
\frac{1}{C\abs{x-y}^{d-2}}\left(1\wedge\frac{\mathtt{d}(x)}{\abs{x-y}}\right)
\left(1\wedge\frac{\mathtt{d}(y)}{\abs{x-y}}\right)\le G(x,y)\\
\le \frac{C}{\abs{x-y}^{d-2}}\left(1\wedge\frac{\mathtt{d}(x)}{\abs{x-y}}\right)
\left(1\wedge\frac{\mathtt{d}(y)}{\abs{x-y}}\right),\quad x \neq y \in \Omega, 
\end{multline}
where $C>0$ is a constant depending only on $d$, $\lambda$, $\Lambda$, $\omega_{\mathbf A}$, $p_0$, $\norm{\vec b}_{L^{p_0}}$, $\norm{c}_{L^{p_0/2}}$, $\diam \Omega$, and the $C^{1,\alpha}$ characteristic of $\Omega$.
\end{theorem}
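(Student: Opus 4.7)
The plan is to derive \eqref{eq1850mon} from a boundary Harnack principle (BHP) for $L^*$-solutions, with Theorem \ref{maintheorem} supplying the lower half, combined with the boundary regularity results of \cite{CDKK24} and classical interior Green's function theory. Under our assumptions, the Green's function $G$ exists by \cite{DEK18}, \cite{CDKK24} and satisfies $L^*_y G(x,\cdot)=-\delta_x$ with zero boundary data, so for fixed $x$ the map $y\mapsto G(x,y)$ is a continuous, nonnegative $L^*$-solution on $\Omega\setminus\{x\}$ vanishing on $\partial\Omega$; the dual statement holds for $L$ in the other variable, and interior Harnack chains yield the classical two-sided interior bound $G(x,y)\asymp|x-y|^{2-d}$ whenever $|x-y|\le\tfrac14\min\{\mathtt{d}(x),\mathtt{d}(y)\}$.

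The first step is to establish a BHP for nonnegative $L^*$-solutions in $\Omega\cap B_R(y_0)$, with $y_0\in\partial\Omega$, vanishing on $\partial\Omega\cap B_R(y_0)$: if $u_1,u_2$ are two such solutions, then $u_1/u_2\asymp u_1(y^*)/u_2(y^*)$ on $\Omega\cap B_{R/2}(y_0)$, where $y^*:=y_0+\tfrac{R}{2}\vec{\nu}$. The upper half $u_i(y)\le C(\mathtt{d}(y)/R)\,u_i(y^*)$ follows from the boundary $C^1$-type estimate for $L^*$-solutions in $C^{1,\alpha}$ domains obtained in \cite{CDKK24}; the lower half $u_i(y)\ge c(\mathtt{d}(y)/R)\,u_i(y^*)$ is the quantitative form of Theorem \ref{maintheorem}, obtained after normalizing $u_i$ by its value at $y^*$ so that the Hopf constant depends only on the structural data. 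The analogous BHP in the $x$-variable for the non-divergence operator $L$ is classical.

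With both boundary Harnack principles in hand, \eqref{eq1850mon} follows from a three-case analysis on $r:=|x-y|$: (i) when $r\le\tfrac14\min\{\mathtt{d}(x),\mathtt{d}(y)\}$ the interior bound gives $G(x,y)\asymp r^{2-d}$; (ii) when exactly one of the distances is below $r/8$, say $\mathtt{d}(y)$, one applies the $L^*$-BHP at the projection $y_0$ of $y$ with reference point $y^*$ at depth $\asymp r$, for which case (i) delivers $G(x,y^*)\asymp r^{2-d}$; (iii) when both distances are below $r/8$, one introduces an intermediate interior point $z$ with $\mathtt{d}(z)\asymp|x-z|\asymp|y-z|\asymp r$ and composes the two BHPs, applying the $L$-BHP in $x$ to $G(\cdot,z)$ and the $L^*$-BHP in $y$ to $G(x,\cdot)$, to obtain $G(x,y)\asymp(\mathtt{d}(x)\mathtt{d}(y)/r^2)\,r^{2-d}$. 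Assembling the four regimes yields \eqref{eq1850mon}.

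The main obstacle is the lower half of the $L^*$-BHP. Theorem \ref{maintheorem} as stated only asserts existence of $c_0$, whereas the Green's function bound requires a scale-invariant Hopf estimate of the form $u(y_0+t\vec{\nu})\ge c_0\,t\,\esssup_{\Omega\cap B_{R/2}(y_0)}u/R$ with $c_0$ depending only on the structural constants after rescaling. Extracting this quantitative dependence from the iteration argument underlying Theorem \ref{maintheorem} is the essential additional technical work; a secondary, more routine task is to verify that the interior Harnack inequality and the existence and duality of $G$ carry over to our low-regularity coefficient regime, using \cite{DEK18}, \cite{CDKK24}.
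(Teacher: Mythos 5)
Your overall architecture coincides with the paper's: the upper bound comes from a boundary Lipschitz-type estimate for $G^*(\cdot,x)$ in the $y$-variable together with standard $L^p$ gradient estimates for $L$ in the $x$-variable, and the lower bound from an interior estimate, Harnack chains adapted to the $C^{1,\alpha}$ geometry, and scale-invariant boundary Hopf estimates for both $L$ and $L^*$, assembled through a case analysis on $|x-y|$ versus $\mathtt{d}(x),\mathtt{d}(y)$. The genuine gap is the one you yourself flag and then defer: the lower half of your $L^*$-boundary comparison, i.e.\ the scale-invariant estimate $u(z)/\mathtt{d}(z)\ge \frac{C}{r}\fint_{B_r(y_0)\cap\Omega}u$ for nonnegative $L^*$-solutions vanishing on $B_{r_0}(y_0)\cap\partial\Omega$, with $C$ depending only on the structural data. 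This is not a routine ``extraction of quantitative dependence'' from Theorem \ref{maintheorem}: the qualitative constant $c_0$ there depends on the solution, and the usual normalization/compactness routes to a uniform Hopf constant break down for double divergence form operators because constants are not solutions and no ABP-type estimate is available. In the paper this estimate is precisely \eqref{eq_bdry_hnk} in Proposition \ref{prop01} (used in the form \eqref{eq1100thu}), and its proof --- the decomposition $u=v+w$, the $L^{1/2}$ iteration of the affine approximations, Lemma \ref{lem0921tue}, and Lemma \ref{lem1548sun} --- constitutes the bulk of Section \ref{sec2}; the deduction of \eqref{eq1850mon} from it is comparatively mechanical. As written, your proposal therefore assumes the hardest ingredient rather than proving it.

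Two smaller points. First, the interior two-sided bound is not ``classical'' in this regularity class: the crude upper bound $G\le C_0|x-y|^{2-d}$ is a recent result (\cite{DKK25}), and the interior lower bound $G(x,y)\ge C|x-y|^{2-d}$ (the paper's \eqref{eq2100thu}) requires an argument --- testing with a cutoff $\eta$, using $1=\eta(x)=\int_\Omega G(x,\cdot)L\eta$, $c\le 0$, H\"older's inequality, and Harnack chains for $G^*(\cdot,x)$; this step should be spelled out rather than cited as folklore. Second, the full quotient boundary Harnack principle you posit is more than is needed and would itself require proof for (DD) operators; the paper avoids it by using only one-sided ingredients, namely the boundary Lipschitz estimate of Remark \ref{rmk1203tue} for the upper bound and \eqref{eq1100thu} (together with its non-divergence counterpart, cf.\ \cite{N1}) for the lower bound, which is both weaker and sufficient.
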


For divergence form operators this estimate goes back to the well-known works \cite{GW82} and \cite{Zh}. The existence of the Green function for $L_0$ under the hypotheses of Theorem \ref{GreenEstimate} was recently proven in \cite{HK20}, together with the crude estimate $G(x,y)\le C\abs{x-y}^{d-2}$.
The full estimate in the above theorem was subsequently obtained via probabilistic methods in \cite{CW23} (that paper contains also a review of the history and challenges of bounds on Green functions), and under the supplementary assumptions that $\mathbf{A}$ is Dini continuous and $\Omega$ is $C^{1,1}$ smooth.
We relax these hypotheses here, and give a pure PDE proof of the bound on the Green function.

We note that the upper bound in Theorem \ref{GreenEstimate} was previously established in \cite{CDKK24}. The lower bound follows from the Harnack inequality in \cite{GK24} and Proposition \ref{prop01}, which plays a crucial role in the proof of Theorem \ref{maintheorem}.

\section{Proof of Theorem \ref{maintheorem}}		\label{sec2}
We begin by observing that we may assume $c \le 0$.
Indeed, decomposing $c$ as $c=c^{+}-c^{-}$, we define the modified operator
\[
L_{-}^*=\dv^2(\mathbf A u)-\dv(\vec b u)-c^-u.
\]
Since $L^*u=L_{-}^*u+c^+u$ and $u \ge 0$ in $\Omega$, it follows that $u$ is an $L^*_{-}$-supersolution in $\Omega$.
Therefore, for the remainder of the proof, we assume $c \le 0$ by replacing $c$ with $-c^-$.

The following lemma establishes a comparison principle for $L^*$-supersolution.

\begin{lemma}[Comparison Principle]
Let $\tilde \Omega$ be a bounded smooth domain.
Suppose $u$ is a continuous $L^*$-supersolution in $\tilde\Omega$ and satisfies
\[
\liminf_{x\to x_0} u(x) \ge 0,\qquad \forall x_0 \in \partial \tilde\Omega.
\]
Then $u \ge 0$ in $\tilde\Omega$.
\end{lemma}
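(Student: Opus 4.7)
The plan is to reduce the statement to proving $\int_{\tilde\Omega} u f \ge 0$ for every $f\in C^\infty_c(\tilde\Omega)$ with $f\ge 0$; by continuity of $u$ this implies $u\ge 0$ in $\tilde\Omega$. Given such an $f$, I would test the supersolution condition \eqref{eq0710thu} against the solution of the non-divergence Dirichlet problem $L v=-f$ with zero boundary data on a family of smooth subdomains $\Omega_\delta\Subset\tilde\Omega$ exhausting $\tilde\Omega$.

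Concretely, choose a smooth exhaustion $\{\Omega_\delta\}_{\delta>0}$ with $\Omega_\delta\nearrow\tilde\Omega$ and $C^{1,1}$ characteristic bounded uniformly in $\delta$, e.g., via level sets of a mollified distance function to $\partial\tilde\Omega$. For $\delta$ small enough that $\supp f\Subset\Omega_\delta$, let $v_\delta$ solve $L v_\delta=-f$ in $\Omega_\delta$ with $v_\delta=0$ on $\partial\Omega_\delta$. By the classical $W^{2,p}$ theory for non-divergence operators with continuous leading coefficient (ensured by \eqref{DiniMean}) and lower-order terms as in \eqref{cond_lower}, together with $c\le 0$, this problem is uniquely solvable for every $p<p_0$ with $\norm{v_\delta}_{W^{2,p}(\Omega_\delta)}\le C\norm{f}_{L^p}$, where $C$ is independent of $\delta$. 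Applying the maximum principle to $-v_\delta$ yields $v_\delta\ge 0$, whence $Dv_\delta\cdot\vec n\le 0$ on $\partial\Omega_\delta$ for the outer unit normal $\vec n$. Inserting $v_\delta$ into \eqref{eq0710thu} gives
\[
-\int_{\Omega_\delta} u f \;=\; \int_{\Omega_\delta} u L v_\delta \;\le\; \int_{\partial\Omega_\delta}(Dv_\delta\cdot\vec n)\,u\,a^{ij}n_i n_j\,dS.
\]

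To pass to the limit, I would bound the right-hand side above by $\Lambda\,\epsilon_\delta\int_{\partial\Omega_\delta}\abs{Dv_\delta}\,dS$, where $\epsilon_\delta:=\sup_{\partial\Omega_\delta}u^{-}$, using $Dv_\delta\cdot\vec n\le 0$ and $a^{ij}n_i n_j\ge 0$. By compactness of $\partial\tilde\Omega$, continuity of $u$, and the boundary assumption $\liminf_{x\to x_0}u(x)\ge 0$, the set $\{u\le -\eta\}$ is compactly contained in $\tilde\Omega$ for every $\eta>0$, so $\epsilon_\delta\to 0$ as $\delta\to 0$. Choosing any $p\in(d,p_0)$ and applying Morrey's embedding $W^{2,p}\hookrightarrow C^{1,\beta}$ produces a uniform $L^\infty$ bound on $Dv_\delta$ up to the boundary, keeping $\int_{\partial\Omega_\delta}\abs{Dv_\delta}\,dS$ bounded. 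Sending $\delta\to 0$ then delivers $\int_{\tilde\Omega} u f\ge 0$ and completes the proof. The main technical point is securing the $\delta$-uniform $W^{2,p}$ estimate up to $\partial\Omega_\delta$; I expect this to be routine once $\Omega_\delta$ is constructed with uniform $C^{1,1}$ boundary data, since the relevant non-divergence $W^{2,p}$ theory is classical under \eqref{ellipticity-nd}--\eqref{cond_lower}.
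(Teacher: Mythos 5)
Your strategy is essentially the one the paper uses: test the supersolution inequality \eqref{eq0710thu} with the solution of the adjoint non-divergence Dirichlet problem with signed right-hand side, invoke the maximum principle (using $c\le 0$ after the paper's reduction) to get $v\ge 0$ and hence $Dv\cdot\vec n\le 0$ on the boundary, and conclude that $\int u f$ has the correct sign for all signed test densities $f$. The paper does this in a single step on $\tilde\Omega$ itself, solving $Lv=g\le 0$ with $v\in W^{2,p_0/2}(\tilde\Omega)\cap W^{1,p_0/2}_0(\tilde\Omega)$ and discarding the boundary term by the sign of $u$ there; your exhaustion by subdomains $\Omega_\delta$, with the quantitative bound by $\Lambda\,\epsilon_\delta\int_{\partial\Omega_\delta}\abs{Dv_\delta}\,dS$ where $\epsilon_\delta=\sup_{\partial\Omega_\delta}u^-\to 0$, is a slightly more careful way of exploiting the $\liminf$ hypothesis, since $u$ is only assumed continuous inside $\tilde\Omega$ and need not have boundary values on $\partial\tilde\Omega$.

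The one step that fails as stated is the regularity claim. Under \eqref{cond_lower} you only have $c\in L^{p_0/2}$ with $p_0>d$, so the zeroth-order term $c\,v_\delta$ lies merely in $L^{p_0/2}$ and the solution is in general only in $W^{2,p_0/2}$; when $d<p_0<2d$ (which the hypotheses allow) this rules out the asserted $\delta$-uniform $W^{2,p}$ estimate for $p\in(d,p_0)$, hence also the Morrey embedding into $C^{1,\beta}$ and the $L^\infty$ bound on $Dv_\delta$ up to $\partial\Omega_\delta$. This is exactly why the paper works in $W^{2,p_0/2}$. Fortunately your argument only needs $\int_{\partial\Omega_\delta}\abs{Dv_\delta}\,dS$ to be bounded uniformly in $\delta$: this follows from a $\delta$-uniform $W^{2,p_0/2}(\Omega_\delta)$ estimate (available since $\mathbf{A}$ is uniformly continuous by \eqref{DiniMean}, $c\le 0$, and your $\Omega_\delta$ are constructed with uniformly controlled $C^{1,1}$ character and diameter) combined with the trace embedding $W^{1,p_0/2}(\Omega_\delta)\hookrightarrow L^{1}(\partial\Omega_\delta)$, whose constant is likewise uniform. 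With that replacement your limiting argument goes through and the proof is correct.
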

\begin{proof}
Consider $g \in C^\infty_c(\tilde\Omega)$ with $g \le 0$ in $\tilde\Omega$.
Let $v \in W^{2, p_0/2}(\tilde \Omega) \cap W^{1,p_0/2}_0(\tilde\Omega)$ be the solution to the problem
\[
Lv=g \;\text{ in }\;\tilde\Omega,\quad v=0\;\text{ on }\;\partial\tilde\Omega.
\]
Since $Lv =g \le 0$ in $\tilde\Omega$, the maximum principle implies that $v \ge 0$ in $\tilde\Omega$.
Moreover, since $v\ge 0$ in $\tilde\Omega$ and $v=0$ on $\partial\tilde\Omega$, we conclude that $(Dv\cdot \vec n) \le 0$ on $\partial\tilde\Omega$.
Thus, it follows from \eqref{eq0710thu} that
\[
\int_{\tilde\Omega} u g=\int_{\tilde \Omega} u Lv
\le \int_{\partial \tilde \Omega} (Dv\cdot \vec n) u a^{ij}n_i n_j \le 0.
\]
Since $g \le 0$ is arbitrary, we conclude that $u \ge 0$ in $\tilde \Omega$.
\end{proof}

We will show that $u>0$ in $\Omega$.
Since $u$ is not identically zero, there exists a point  $x^o \in \Omega$ such that $u(x^o)>0$.
For any $x\in \Omega$, we can construct a sequence of balls $\{B_i\}_{i=0}^N$ satisfying
\[
\overline B_i \subset \Omega,\quad B_i \cap B_{i+1} \neq \emptyset,\quad  x^o \in \partial B_0,\quad x \in B_N.
\] 
Let $w \in C(\overline B_0)$ be the solution of
\begin{equation}			\label{eq0740mon}
L^* w = 0\;\text{ in }\;B_0,\quad w=u\;\text{ on }\partial B_0.
\end{equation}
The existence of such a function $w$ follows from \cite[Lemma 2.22]{DEK18} and a simple decomposition argument as follows.
Consider the problem
\[
D_{ij}(a^{ij} w_1) = 0\;\text{ in }\;B_0,\quad w_1=u\;\text{ on }\partial B_0.
\]
By \cite[Lemma 2.22]{DEK18}, there exists a unique solution $w_1 \in C(\overline B_0)$.
In particular, this implies that $b^i w_1 \in L^{p_0}(B_0)$ and $c w_1 \in L^{p_0/2}(B_0)$.
Next, let $w_2 \in L^{p_0/2}(B_0)$ be the solution of 
\[
L^*w_2=D_i(b^iw_1)-cw_1\;\text{ in }\;B_0,\quad w_2=0 \;\text{ on }\;\partial B_0.
\]
The existence and uniqueness of $w_2$ follow from $W^{2,p}$ theory for $L$ and the  transposition method (see \cite{EM2017}).
Moreover, by \cite{DEK18}, we have $w_2 \in C(\overline B_0)$.
Thus, defining $w=w_1+w_2$ we obtain a solution to \eqref{eq0740mon}.

By the comparison principle, we have
\[
0\le w \le u\;\text{ in }\;B_0.
\]
Applying the Harnack inequality, we obtain $w>0$ in $B_0$, which implies $u>0$ in $B_0$.
Repeating this process iteratively along the sequence of balls $\{B_i\}$, we establish that $u>0$ in $B_N$, and in particular, $u(x)>0$.
Thus, we conclude that $u>0$ in $\Omega$.

\medskip

Next, we introduce the following notation:
\[
B_r^+=B_r^{+}(0)=B_r(0) \cap \set{x_d >0}\quad\text{and}\quad  T_r=T_r(0)=B_r(0) \cap \set{x_d=0}.
\]
We fix a smooth (convex) domain $\mathcal{D}$ satisfying
\begin{equation}			\label{eq1142wed}
B_{1/2}^{+}(0) \subset \mathcal{D} \subset B_1^{+}(0),
\end{equation}
such that $\partial \mathcal{D}$ contains a flat portion $T_{1/2}(0)$.
For $y_0 \in \partial \mathbb{R}^d_{+}=\set{x_d=0}$, we define
\[
B_r^{+}(y_0)=B_r^{+}(0)+ y_0,\quad T_r(y_0)=T_r(0)+y_0,\quad\text{and}\quad \mathcal{D}_r(y_0)= r \mathcal{D}+ y_0.
\]

\smallskip
We apply the \emph{boundary flattening method} using the regularized distance function, followed by an affine transformation, as described in Section 4.2 of \cite{CDKK24}.

We note that this transformation ensures the transformed function $\tilde u$ satisfies
\[
\tilde L^* \tilde u  \le 0, \quad \tilde u > 0\;\text{ in }\;B_8^+,\quad \tilde u(0)=0,
\]
where $\tilde{L}^*$ denotes the transformed operator.

As explained in \cite{CDKK24}, this transformation preserves the property that $\tilde\omega_{\rm coef}$, the corresponding function of $\omega_{\rm coef}$, remains a Dini function.
Moreover, the transformed lower-order term coefficients remain in $L^p$ for some $p>1$.
Thus, by Remark 1.8 in \cite{CDKK24} and as we will reaffirm in the proof below, it suffices to work with $\tilde L^*$ and $\tilde u$ in $B_8^+$ instead of $L$ and $u$ in a neighborhood of $x_0$. 

Let $v \in C(\overline{\mathcal D}_8)$ be a solution of the problem
\[
\tilde L^* v = 0\;\text{ in }\;\mathcal{D}_8,\quad  v=g\;\text{ on }\partial \mathcal{D}_8,
\]
where $g$ is a continuous function satisfying
\[
0 \lneqq g \le \tilde u\;\text{ on }\partial \mathcal{D}_8,\quad g = 0\;\text{ in }\;T_4.
\]

By the comparison principle applied to $\tilde u-v$ in $\mathcal D_8$, we obtain $\tilde u \ge v$ in $\mathcal D_8$.
Clearly, $v$ is not identically zero.
Therefore, Proposition \ref{prop01} asserts that
\[
D_d v(0) \ge \frac{C}{r_0} \fint_{B_{r_0}^+} v>0.
\]
Since $\tilde u(0)=v(0)=0$ and $\tilde u \ge v$ in $B_4^+$, we obtain
\[
\liminf_{t\searrow0} \frac{\tilde u(t \vec e_d)}{t} \ge \liminf_{t \searrow0} \frac{v(t \vec e_d)-v(0)}{t} =D_d v(0)>0.
\]
The theorem follows from transforming $\tilde u$ back to $u$.

For notational convenience, in the following proposition and throughout the rest of this section, we replace $\tilde u$ and $\tilde L^*$ with $u$ and $L^*$, respectively.

\begin{proposition}			\label{prop01}
There exist constants $C>0$ and $r_0 \in (0,\frac12)$ depending on $d$, $\lambda$, $\Lambda$, and $\omega_{\rm coef}$ such that
\begin{equation}			\label{eq2017sun}
D_d u (0) \ge \frac{C}{r_0} \fint_{B_{r_0}^+} u
\end{equation}
whenever $u$ is a nonnegative solution of
\begin{equation}			\label{eq1737mon}
L^*u =0\;\text{in }\; B_4^+,\quad u=0\;\text{ on }\;\partial B_4^+ \cap \set{x_d=0}.
\end{equation}
Furthermore, there exist constants $\tilde r_0 \in (0, \frac12)$ and $\beta_1>0$ depending on $d$, $\lambda$, $\Lambda$, and $\omega_{\rm coef}$ such that
\begin{equation}			\label{eq_bdry_hnk}
\inf_{B_r^+}\,\frac{u(x)}{x_d} \ge \frac{\beta_1}{r} \fint_{B_r^+} u,\quad \forall r \in (0,\tilde r_0).
\end{equation}
\end{proposition}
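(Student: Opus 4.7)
The plan is to deduce \eqref{eq2017sun} from the pointwise boundary differentiability of $u$ at the origin, established in \cite{CDKK24}, by running a multi-scale iteration that tracks the averaged slopes
\[
\ell^{\mathrm{av}}_r := \frac{1}{\kappa_d\, r}\fint_{B_r^+}u,\qquad \kappa_d := \fint_{B_1^+} x_d.
\]
Since $u\ge 0$ with $u(0)=0$ and $u\equiv 0$ on $T_4$, the results of \cite{CDKK24} give $Du(0)=\ell\vec e_d$ with $\ell:=D_d u(0)\ge 0$ and a quantitative expansion $u(x)=\ell x_d + o(|x|)$ as $x\to 0$, governed by a Dini modulus determined by $\omega_{\mathrm{coef}}$; averaging over $B_r^+$ this already gives $\ell^{\mathrm{av}}_r\to \ell$. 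The task reduces to showing that $\ell$ does not drift too far below $\ell^{\mathrm{av}}_{r_0}$ for some universal scale $r_0$.

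I would first establish a \emph{multiplicative} one-step recursion
\[
\ell^{\mathrm{av}}_{\rho r}\ \ge\ \bigl(1-C\,\omega_{\mathrm{coef}}(r)\bigr)\,\ell^{\mathrm{av}}_r,\qquad r\le r_\ast,
\]
for some fixed $\rho\in(0,1/2)$ and universal $r_\ast$. The idea is to compare $u$ with the solution $v$ of the frozen constant-coefficient adjoint problem $L_0^\ast v=0$ on $B_r^+$ (coefficients replaced by their averages on $B_r^+$) with matching boundary data; because $L_0^\ast$ with constant leading coefficients reduces to a constant-coefficient non-divergence operator, $v$ is smooth and the classical Hopf lemma yields $\ell^{v,\mathrm{av}}_{\rho r}\ge c_0\,\ell^{v,\mathrm{av}}_r$. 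The DMO hypothesis on $\mathbf A$, the $L^{p_0}$ bounds on $\vec b, c$, and the $L^\infty$ and $C^{1,\mathrm{Dini}}$ estimates developed in \cite{CDKK24} then control $u-v$ by $\omega_{\mathrm{coef}}(r)$ times the current slope $\ell^{\mathrm{av}}_r$, which gives the multiplicative form of the recursion.

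Iterating across the geometric scales $r_k:=\rho^k r_0$ and using the Dini property of $\omega_{\mathrm{coef}}$,
\[
\ell=\lim_{k\to\infty}\ell^{\mathrm{av}}_{r_k}\ \ge\ \ell^{\mathrm{av}}_{r_0}\,\prod_{k=0}^{\infty}\bigl(1-C\,\omega_{\mathrm{coef}}(r_k)\bigr).
\]
Choosing $r_0$ small enough that $\sum_k \omega_{\mathrm{coef}}(r_k)<\tfrac{1}{2C}$, which is possible since $\omega_{\mathrm{coef}}$ is Dini, forces the product to exceed $\tfrac12$, giving $\ell\ge(2\kappa_d r_0)^{-1}\fint_{B_{r_0}^+}u$, i.e.\ \eqref{eq2017sun}. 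For \eqref{eq_bdry_hnk}, I would apply \eqref{eq2017sun} at every boundary point $y_0\in T_{r/2}$ (after a translation in the tangential variable), obtaining $\liminf_{t\downarrow 0}u(y_0+t\vec e_d)/t\ge c\,r^{-1}\fint_{B_r^+}u$ pointwise on $T_r$. Combining with the boundary $C^{1,\mathrm{Dini}}$ modulus of \cite{CDKK24}, so that $u/x_d$ has a uniform modulus of continuity up to $T_r$, together with the interior Harnack inequality for $L^\ast$ from \cite{DEK18,GK24}, extends the lower bound to all of $B_{\tilde r_0}^+$ and gives \eqref{eq_bdry_hnk}.

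The central difficulty is securing the \emph{multiplicative} form of the one-step recursion. A direct application of the iteration in \cite{CDKK24} yields an error of the form $C\,\omega_{\mathrm{coef}}(r)\,\|u\|_{L^\infty(B_1^+)}$, which is innocuous for proving convergence of $\ell^{\mathrm{av}}_r$ but could dominate $\ell^{\mathrm{av}}_{r_0}$ and collapse the lower bound on $\ell$. The subtlety is thus to show that, at each scale, the deviation from the frozen-coefficient comparison is proportional to the current slope itself—equivalently, that at every scale a nonnegative $L^\ast$-solution vanishing on a flat boundary is quantitatively comparable to $x_d$, with constants depending only on $d,\lambda,\Lambda,\omega_{\mathrm{coef}}$.
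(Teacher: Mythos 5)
There is a genuine gap, and it sits exactly at the point you flag in your last paragraph: the multiplicative one-step recursion $\ell^{\mathrm{av}}_{\rho r}\ge\bigl(1-C\,\omega_{\mathrm{coef}}(r)\bigr)\ell^{\mathrm{av}}_r$ is not obtainable by comparison with the frozen-coefficient solution. Even with $\omega_{\mathrm{coef}}\equiv 0$ (constant coefficients, $u=v$), the claim would say that for \emph{every} nonnegative solution vanishing on the flat boundary the averaged slope does not decrease when the scale shrinks; this is false. Take a nonnegative harmonic function on $B_r^+$ vanishing on $T_r$ whose mass is concentrated near the lateral boundary (e.g.\ a half-space Poisson kernel with pole just outside $B_r$): its normal derivative at $0$, hence $\ell^{\mathrm{av}}_{\rho r}$ for small $\rho$, is only a fixed dimensional fraction $\beta_0<1$ of $\ell^{\mathrm{av}}_r$. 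The classical Hopf/boundary Harnack inequality you invoke gives precisely $\ell^{v,\mathrm{av}}_{\rho r}\ge \beta_0\,\ell^{v,\mathrm{av}}_r$ and no better, and since a \emph{new} comparison function $v$ is frozen at every scale, these constant losses compound: the true recursion is $\ell^{\mathrm{av}}_{\rho r}\ge(\beta_0-C\omega_{\mathrm{coef}}(r))\ell^{\mathrm{av}}_r$, whose infinite product is zero, so the scheme yields no lower bound on $\ell=D_du(0)$. To upgrade the per-step factor to $1-o(1)$ you would need to know beforehand that at scale $r$ the function $u$ is within $o(1)$ times its slope of a multiple of $x_d$ — which is essentially the conclusion being sought, so the argument as proposed is circular. (Controlling the error $u-v$ by $\omega_{\mathrm{coef}}(r)$ times the current slope is the easier half and does follow from the boundary $L^\infty$ bound plus the doubling property; also, quoting differentiability from \cite{CDKK24} only gives $\ell^{\mathrm{av}}_r\to\ell$, with errors measured against global norms of $u$, which by itself says nothing quantitative about $\ell$ versus $\ell^{\mathrm{av}}_{r_0}$.)

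The paper's proof is organized precisely so that the constant $\beta_0<1$ is paid only \emph{once}, not once per scale. It runs an excess-decay iteration on the best affine approximation of $u$ in the $L^{1/2}$ quasi-norm (Steps 2--6), where the key technical point is Lemma \ref{lem1548sun}: the running sup-norms $M_j(r_0)$ are bounded by $\frac{C}{r_0}\fint_{B_{4r_0}^+}u$, so the accumulated iteration error in the first-order expansion \eqref{eq2221sun} is proportional to $\frac{1}{r_0}\fint_{B_{r_0}^+}u$ times $\kappa^{j/2}$ plus a Dini tail. Separately, the non-degeneracy \eqref{eq1532wed} and the boundary Harnack bound \eqref{eq1414tue} for the frozen-coefficient part are used at a \emph{single} well-chosen scale $\kappa^N r_0$, with $N$ fixed so that the accumulated error is below $\nu_0\beta_0/128$ of the main term (see \eqref{eq0630mon}--\eqref{eq1222sun}); this is what converts differentiability into the quantitative bound \eqref{eq2017sun}. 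Your plan for \eqref{eq_bdry_hnk} (apply \eqref{eq2017sun} at translated boundary points and use the Dini first-order modulus) matches the paper's Step 8, but it rests on the first part, so the gap above must be closed first.
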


\medskip
It remains to prove Proposition \ref{prop01}.
We divide the proof into several steps.
\subsection*{Step 1}
Let $u$ be a nonnegative solutions of \eqref{eq1737mon}.
We begin by stating some useful properties of $u$.
\begin{lemma}[Doubling property]			\label{lem_dp}
There exists a constant $C>0$ depending only on $d$, $\lambda$,  $\Lambda$, $p_0$, $\norm{\vec b}_{L^{p_0}(\Omega)}$, $\norm{c}_{L^{p_0/2}(\Omega)}$, and $C^{1,\alpha}$ characteristics of $\partial\Omega$, such that for any $r \in (0,1]$,
\[
\fint_{B_{2r}^+} u \le C \fint_{B_r^+}u.
\]
\end{lemma}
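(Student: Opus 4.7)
The plan is to dominate both $\fint_{B_{2r}^+} u$ and $\fint_{B_r^+} u$ by the value of $u$ at a single non-tangential corkscrew point $X_r := \tfrac{3r}{2}\,\vec e_d \in B_{2r}^+$, for which $B_{r/4}(X_r)$ lies at height $\ge 5r/4$ above $T_4$. The core tools are the interior Harnack inequality for nonnegative $L^*$-solutions from \cite{DEK18, BRS23} and the boundary continuity estimate for $L^*$-solutions vanishing on a flat portion of the boundary, also from \cite{DEK18}.

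For the lower bound $\int_{B_r^+} u \ge c\, r^d u(X_r)$, I would take the auxiliary non-tangential point $Y_r := \tfrac{r}{2}\vec e_d \in B_r^+$ with $B_{r/4}(Y_r) \subset B_r^+$. Interior Harnack on $B_{r/4}(Y_r)$ yields $u \ge c\, u(Y_r)$ on $B_{r/8}(Y_r)$, hence $\int_{B_r^+} u \ge c\, r^d u(Y_r)$. A short chain of $O(1)$ interior Harnack balls of radius $\sim r/16$, all contained in $B_{3r}^+\cap\{x_d\ge r/4\}\subset B_4^+$ and joining $Y_r$ to $X_r$, then gives $u(X_r) \le C\, u(Y_r)$, completing this half.

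The matching upper bound $\int_{B_{2r}^+} u \le C\, r^d u(X_r)$ is the substantive step and reduces to a Carleson-type $L^\infty$ estimate $\sup_{B_{2r}^+} u \le C\, u(X_r)$. Splitting the integral at height $r/8$: on $B_{2r}^+ \cap \{x_d \ge r/8\}$, a finite interior Harnack chain to $X_r$ yields $u \le C u(X_r)$ pointwise. On the boundary strip $B_{2r}^+ \cap \{0 < x_d < r/8\}$, the boundary continuity estimate from \cite{DEK18}, with modulus governed by $\omega_{\rm coef}$ and the $L^{p_0}$ coefficient norms and exploiting $u|_{T_4}=0$, bounds $u(y)$ by a small multiple of $\sup_{B_{5r/2}^+} u$. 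Iterating this relation over a geometric sequence of nested scales $2r < (1+2^{-1})\cdot 2r < \dots \le 4$, with an interior Harnack chain contributing at each intermediate scale, absorbs the residual sup on the right-hand side back into a constant multiple of $u(X_r)$.

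Combining the two bounds and using $|B_{2r}^+|=2^d|B_r^+|$ produces the doubling inequality. The main obstacle is precisely the Carleson-type $L^\infty$ bound: in divergence form it is classical via subsolution comparison with boundary Green functions, but for DD equations one cannot invoke a sub/supersolution inequality directly and must instead anchor the argument in the DEK18 boundary modulus, carrying out the dyadic absorption carefully so that the constant in the final estimate depends only on the quantities listed in the statement (with the DMO-type modulus $\omega_{\rm coef}$ governing the rate at which the iteration converges, and the $C^{1,\alpha}$ characteristic entering via the flattening transformation that reduced the problem to $B_4^+$).
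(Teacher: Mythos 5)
Your overall scheme (lower bound for $\fint_{B_r^+}u$ via a corkscrew point and interior Harnack, upper bound via a Carleson-type estimate $\sup_{B_{2r}^+}u\le C\,u(X_r)$) is a legitimate route in principle, but the pivotal step --- the Carleson estimate --- is not proved by the mechanism you describe, and this is exactly the point you yourself flag as the main obstacle. First, with the boundary strip fixed at height $x_d<r/8$ inside $B_{2r}^+$, the boundary continuity estimate of \cite{DEK18} only gives $u(y)\le C\,\omega^*(c)\,\sup_{B_{5r/2}^+}u$ with the modulus evaluated at a \emph{fixed} ratio $c\simeq 1/20$; there is no reason this is ``a small multiple'', so the absorption has no smallness to work with unless you shrink the strip to height $\varepsilon r$ with $\varepsilon$ chosen so that $C\omega^*(\varepsilon)=\theta$ is small, and then the interior points at heights between $\varepsilon r$ and $r/8$ cost an extra Harnack factor $H(\varepsilon)$. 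Second, your iteration marches \emph{upward} through $\sim\log(1/r)$ nested scales from $2r$ to $4$, and at the $k$-th scale the comparison back to $X_r$ requires a Harnack chain of length $\sim k$, contributing a factor $H^{k}$; unfolding the recursion $S_k\le C H^k u(X_r)+\theta S_{k+1}$ gives a bounded constant only if $\theta H<1$, a quantitative tuning you never perform, and even then the terminal term $\theta^{k_{\max}}\sup u$ at the top scale must still be compared with $u(X_r)$, which amounts to a unit-scale Carleson estimate that the iteration cannot produce (there is no room left in $B_4^+$ to continue) and which is itself nontrivial for double divergence form equations, since barrier/subsolution arguments are unavailable. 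As written, the constant you obtain degenerates like a power of $1/r$, which destroys the uniformity in $r\in(0,1]$ that the lemma requires. A workable repair would be the classical single-scale contradiction argument (assume $u(y)\ge M u(X_r)$, use the \cite{DEK18} modulus to produce points with geometrically growing values and summable displacements, contradict continuity up to $T_4$), but that is a different argument from the one you sketch and still needs to be carried out with the Dini-type modulus and the scale-uniform interior Harnack inequality.

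For comparison, the paper does not argue through a Carleson estimate at all: it reduces the doubling property to the Fabes--Stroock duality lemma for nonnegative adjoint solutions \cite[Lemma 2.0]{FS84}, which tests $u$ against solutions of the non-divergence problem $Lv=g$ and uses Krylov--Safonov-type estimates for $L$ rather than boundary decay of $u$; the only additional point addressed there is that after the boundary flattening the transformed drift is no longer in $L^{p_0}$, which is handled via the Morrey-space/Zvonkin-transform remark so that the Harnack machinery of \cite{GK24} still applies. Your proposal also silently assumes the interior Harnack inequality with the original hypotheses on $\vec b$, ignoring this integrability loss caused by the flattening map, which is a secondary but real omission in the setting where the lemma is actually applied.
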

\begin{proof}
If the operator has no lower-order terms, the lemma follows directly from \cite[Lemma 2.0]{FS84}. To extend the result to our setting, we eliminate the lower-order terms using the method developed in \cite{GK24}, which is based on the Zvonkin transformation. We begin by applying the boundary flattening procedure introduced at the start of this section. The resulting drift term $\vec{b}$ belongs to a suitable Morrey space, allowing us to employ the technique used in \cite{GK24}. Although \cite[Theorem 4.3]{GK24} is stated for $\vec{b} \in L^p$ with $p > d$, the same method applies when $\vec{b}$ lies in the Morrey space we obtain. See the proof of \cite[Theorem 1.1]{GK24}, the parabolic analogue of \cite[Theorem 4.3]{GK24}, which is formulated for lower-order coefficients in Morrey spaces.
\end{proof}

From the boundary $C^0$ estimates for $u$ (see Theorem 1.8 and  Lemma 2.26 in \cite{DEK18}), we obtain
\begin{equation}			\label{eq1234thu}
\norm{u}_{L^\infty(B_r^+)} \le C \fint_{B_{2r}^+} u,
\end{equation}
where $C=C(d, \lambda, \Lambda, \omega_{\rm coef})$.
\begin{remark}
From the context of \cite{DEK18}, the constant $C$ in \eqref{eq1234thu} appears to depend on $\omega_{\mathbf A}$, $\norm{\vec b}_{L^{p_0}}$, and $\norm{c}_{L^{p_0/2}}$, in addition to $d$, $\lambda$, and $\Lambda$.
However, a careful analysis shows that these dependencies are fully accounted for by the assumption that $\omega_{\rm coef}$ from \eqref{omega_coef} is a Dini function.
\end{remark}

Combining \eqref{eq1234thu} with Lemma \ref{lem_dp}, we obtain
\[
\norm{u}_{L^\infty(B_r^+)} \le C \fint_{B_r^+} u,
\]
where $C=C(d, \lambda, \Lambda, \omega_{\rm coef})$.
Thus, there exists $\varepsilon=\varepsilon(d, \lambda, \Lambda, \omega_{\rm coef}) \in (0,\frac12)$ such that
\[
\fint_{B_r^+} \mathbbm{1}_{\{x_d\le \varepsilon r\}} \,u \le \norm{u}_{L^\infty(B_r^+)} \, \frac{\abs{\{x_d \le \varepsilon r\} \cap B_r^+}}{\abs{B_r^+}} \le \frac{1}{2} \fint_{B_r^+} u.
\]

Define $U_r:=\set{x_d > \varepsilon r} \cap B^+_r$.
Then, we have
\[
\frac12 \fint_{B_r^+} u \le  \fint_{B_r^+} u -\fint_{B_r^+} \mathbbm{1}_{\{x_d \le \varepsilon r\}} \,u = \fint_{B_r^+} \mathbbm{1}_{\{x_d>\varepsilon r\}} u  \le \fint_{U_r} u.
\]
Applying the Harnack inequality (see, for example, \cite[Theorem 4.3]{GK24}) to $u$ in $U_r$, we obtain
\[
\inf_{U_r} u \ge \frac{1}{C} \sup_{U_r} u \ge \frac{1}{C} \fint_{U_r} u \ge  \frac{1}{C} \fint_{B_r^+} u.
\]
where $C=C(d,\lambda, \Lambda, \omega_{\rm coef})$.
We emphasize that $C$ is independent of $r \in (0,1]$.

Therefore, we obtain the following estimate:
\begin{equation}				\label{eq1532wed}
\left(\fint_{B_r^+} u^{\frac12} \right)^2 \ge
\left(\frac{\abs{U_r}}{\abs{B_r^+}}\fint_{U_r} u^{\frac12} \right)^2 \ge  \nu_0 \fint_{B_r^+} u,
\end{equation}
where $\nu_0 =\nu_0(d, \lambda, \Lambda, \omega_{\rm coef})>0$.

\subsection*{Step 2}
Next, we decompose $u$ into a sum of ``small'' and ``regular'' parts.

\smallskip
Let $\bar{\mathbf A}=(\mathbf A)_{B_{2r}^+}$ denote the average of $\mathbf A$ over $B_{2r}^+=B_{2r}^+(0)$.
We then express $u$ as $u=v+w$, where $w$ is the $L^p$ weak solution (for some $p>1$) of the problem
\begin{equation}			\label{eq0708mon}
\left\{
\begin{aligned}
\dv^2( \bar{\mathbf A} w) &= -\dv^2 ((\mathbf{A}-\bar{\mathbf A})u)+\dv (\vec b u)-cu\;\text{ in }\; \mathcal{D}_{2r},\\
(\bar{\mathbf A} \nu \cdot \nu) w&= -(\mathbf{A}-\bar{\mathbf A})u \nu \cdot \nu \;\mbox{ on }\;\partial \mathcal{D}_{2r}.
\end{aligned}
\right.
\end{equation}

\begin{lemma}			\label{lem01}
Let $B=B_1(0)$, and let $\bar{\mathbf A}$ be a constant symmetric matrix satisfying the condition \eqref{ellipticity-nd}.
For $\mathbf f \in L^p(B)$, $\vec g \in L^p(B)$, and $h \in L^p(B)$ for some $p>1$, let $u \in L^p(B)$ be the weak solution to the problem
\[
\left\{
\begin{aligned}
\dv^2(\bar{\mathbf A} u)&= \dv^2 \mathbf f+ \dv \vec g + h\;\mbox{ in }\; B,\\
(\bar{\mathbf A} \nu \cdot \nu) u &= \mathbf f \nu\cdot \nu \;\mbox{ on } \; \partial B.
\end{aligned}
\right.
\]
Then, for any $t>0$, we have
\[
\abs{\{x \in B : \abs{u(x)} > t\}}  \le \frac{C}{t} \left(\int_{B} \abs{\mathbf f}+ \abs{\vec g}+ \abs{h} \right),
\]
where $C=C(d, \lambda, \Lambda)$.
The conclusion remains unchanged if we replace $B_1(0)$ with $\mathcal{D}$.
\end{lemma}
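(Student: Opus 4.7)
The plan has two stages: first establish an $L^2\to L^2$ bound by duality with the adjoint Dirichlet Laplacian problem, then upgrade it to weak $(1,1)$ via a Calder\'on--Zygmund decomposition of the data. First I would reduce to the constant-coefficient Laplacian by the affine change of variables $y = \bar{\mathbf{A}}^{-1/2} x$, which converts the PDE into $\Delta u = \dv^2 \mathbf{f} + \dv \vec{g} + h$ on an image domain $\tilde B$ comparable to $B$, while preserving the $L^1$ norms of the data up to a constant depending only on $d, \lambda, \Lambda$. The boundary condition $(\bar{\mathbf{A}}\nu\cdot\nu)u = \mathbf{f}\nu\cdot\nu$ is precisely what makes boundary terms cancel when integrating by parts against any test function $v\in C^2(\overline{\tilde B})$ vanishing on $\partial \tilde B$, yielding the weak formulation $\int_{\tilde B} u\,\Delta v = \int_{\tilde B}(\mathbf{f}:D^2 v - \vec g\cdot Dv + hv)$. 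Testing this against $v_\phi$ solving $\Delta v_\phi = \phi$ in $\tilde B$ with $v_\phi = 0$ on $\partial\tilde B$ and using classical $W^{2,2}$ bounds $\|v_\phi\|_{W^{2,2}}\le C\|\phi\|_{L^2}$, duality yields the strong bound $\|u\|_{L^2(\tilde B)} \le C(\|\mathbf{f}\|_{L^2}+\|\vec g\|_{L^2}+\|h\|_{L^2})$.

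Next I would apply a Calder\'on--Zygmund decomposition of $F := |\mathbf{f}| + |\vec g| + |h|$ at height $\alpha t$ (with $\alpha$ a small absolute constant), producing a good part $F_\flat$ satisfying $\|F_\flat\|_\infty \le Ct$ and $\|F_\flat\|_{L^1} \le \|F\|_{L^1}$, and a bad part $F_\sharp = \sum_i F_\sharp^i$ supported on disjoint cubes $\{Q_i\}$ with $\sum_i |Q_i| \le Ct^{-1}\|F\|_{L^1}$ and $\int_{Q_i} F_\sharp^i\,dy = 0$. Applying this componentwise to $\mathbf{f}, \vec g, h$ and splitting $u = u_\flat + u_\sharp$ by linearity (each summand is a weak solution with the appropriate piece of the data), the good piece is handled via Chebyshev and the $L^2$ bound:
\[
|\{|u_\flat| > t/2\}| \le \frac{4}{t^2}\|u_\flat\|_{L^2}^2 \le \frac{C}{t^2}\|F_\flat\|_\infty\|F_\flat\|_{L^1} \le \frac{C}{t}\|F\|_{L^1}.
\]

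For the bad piece, I would use the Dirichlet Green function representation
\[
u_\sharp(x) = \int_{\tilde B}\bigl(-D^2_y G(x,y):\mathbf{f}_\sharp(y) + \nabla_y G(x,y)\cdot\vec g_\sharp(y) - G(x,y) h_\sharp(y)\bigr)\, dy,
\]
and set $E^* := \bigcup_i Q_i^*$ (doubled cubes), so that $|E^*| \le Ct^{-1}\|F\|_{L^1}$ directly. For $x \notin E^*$, the cancellation of $F_\sharp^i$ on $Q_i$ combined with a Taylor expansion of each kernel around the center of $Q_i$ and the classical pointwise bounds $|D^k_y G(x,y)| \le C|x-y|^{2-d-k}$ yields the H\"ormander-type estimate $\|u_\sharp\|_{L^1(\tilde B\setminus E^*)} \le C\|F\|_{L^1}$; Chebyshev then gives $|\{|u_\sharp|>t/2\}\setminus E^*| \le Ct^{-1}\|F\|_{L^1}$, and adding $|E^*|$ completes the proof. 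The identical argument works with $\mathcal D$ in place of $B$, since only smoothness of the boundary and uniform Green-function estimates are used. The main technical obstacle will be verifying the pointwise bounds on $D^k_y G$ uniformly up to the boundary and handling cubes $Q_i$ touching $\partial\tilde B$, where the standard Whitney geometry must be adapted so that the Taylor expansion of the kernel remains valid; the $\mathbf{f}$ contribution is the delicate one since it is a genuine singular integral, while the $\vec g$ and $h$ terms are fractional integrals with strictly better decay.
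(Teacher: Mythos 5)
Your overall scheme -- reduce to a constant-coefficient problem, get an $L^2$ bound by duality against the $W^{2,2}$-solvable Dirichlet problem, then run a Calder\'on--Zygmund decomposition of the data and treat the bad part through kernel regularity of the Green function -- is the standard route to this weak type-$(1,1)$ estimate, and it is essentially the same family of argument as the one the paper points to (the paper gives no proof of its own; it cites the appendix of \cite{CDKK24}, which in turn follows the duality-plus-CZ strategy of \cite{DK17}, \cite{DEK18}). Your derivation of the weak formulation from the conormal-type boundary condition, the $L^2$ duality step, the treatment of $u_\flat$ by Chebyshev, and the H\"ormander-type tail estimate for interior bad cubes are all correct; since the coefficients are constant, the pointwise bounds $\abs{D^k_{x,y}G(x,y)}\le C\abs{x-y}^{2-d-k}$ up to the boundary are classical, and their uniformity in $\bar{\mathbf A}$ is fine because the image ellipsoid (or image of $\mathcal D$) has smoothness characteristics controlled by $\lambda,\Lambda$ only.

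The one point you flag but do not resolve is in fact the place where your argument, as literally written, breaks: if you extend the data by zero and run the CZ decomposition over ambient cubes, then for a cube $Q_i$ meeting $\partial\tilde B$ the bad piece is mean-zero over $Q_i$ but \emph{not} over $Q_i\cap\tilde B$, which is the only region appearing in the Green representation. Without cancellation, the $\mathbf f$-contribution is only $O(\abs{x-y_i}^{-d})$, and $\int_{\abs{x-y_i}\ge r_i}\abs{x-y_i}^{-d}\,dx$ diverges logarithmically, so the $L^1(\tilde B\setminus E^*)$ bound for $u_\sharp$ fails for boundary cubes; there is also no rescue from extra boundary decay, since $D^2_yG(x,y)$ gains a factor $\mathtt{d}(x)/\abs{x-y}$ but not $\mathtt{d}(y)/\abs{x-y}$. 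The standard and adequate fix is to perform the stopping-time decomposition relative to the domain itself: since $\abs{Q\cap\tilde B}\simeq\abs{Q}$ (or by using a dyadic structure adapted to $\tilde B$), one can define the good part by averages over $Q_i\cap\tilde B$ and the bad part so that $\int_{Q_i\cap\tilde B}\mathbf f^i_\sharp=0$, after which your kernel-difference estimate applies verbatim for $y,y'\in Q_i\cap\tilde B$ (the segment between them stays in the convex domain, or one uses the chord-arc property). With that modification, and with the uniform boundary Green-function derivative estimates stated explicitly, your proof is complete and matches the cited argument in substance.
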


\begin{proof}
See \cite[Appendix]{CDKK24}.
\end{proof}

Recall the definition of  $\omega_{\rm coef}$ in \eqref{omega_coef}.
Using Lemma \ref{lem01} and the geometric properties of $\mathcal{D}$, we obtain, after rescaling,
\begin{equation}			\label{eq2017fri}
\left(\fint_{B_{r}^+} \abs{w}^{\frac12}\right)^{2} \le C \omega_{\rm coef}(2r)\, \norm{u}_{L^\infty(B_{2r}^+)},
\end{equation}
where $C=C(d,\lambda, \Lambda)$.
In fact,  $w \in L^p(B_r^+)$ for any $p \in (0,1)$; however, we fix $p=\frac{1}{2}$ for definiteness.
Note that $v=u-w$ satisfies
\begin{equation}			\label{eq0406mon}
\dv^2(\bar{\mathbf A} v)=0\;\text{ in }\;\mathcal{D}_{2r},\quad
(\bar{\mathbf A} \nu \cdot \nu) v = (\mathbf{A} \nu \cdot \nu)u\;\text{ on }\;\partial \mathcal{D}_{2r}.
\end{equation}
In particular, observe that
\[
v=(\mathbf{A} \nu \cdot \nu)u /(\bar{\mathbf A} \nu \cdot \nu) \ge 0 \text{ on }\;\partial \mathcal{D}_{2r}.
\]
Hence, by the maximum principle, we find that
\begin{equation}			\label{eq0410mon}
v \ge 0\;\text{ in }\;\mathcal{D}_{2r}.
\end{equation}
Moreover, it follows from \eqref{eq0406mon} that $v$ satisfies
\begin{equation}			\label{eq0450mon}
\dv(\bar{\mathbf A} Dv)=0\;\text { in }\; B_r^+,\quad
v=0 \;\text{ on }\;T_r.
\end{equation}

By the regularity estimates for solutions of elliptic equations with constant coefficients, we conclude that $v \in C^{\infty} (\overline{B_{r/2}^+})$.
In particular, the following estimate holds:
\[
[v]_{C^{1,1}(\overline{B_{r/2}^+})} \le \frac{C}{r^2} \left(\fint_{B_r^+} \abs{v}^{\frac12}\right)^{2},
\]
where $C=C(d,\lambda, \Lambda)$.
Applying Taylor's theorem, we obtain
\[
\sup_{x\in B_\rho^+}\, \bigabs{v(x)-v(0)-Dv(0)\cdot x}  \le C [v]_{C^{1,1}(B_\rho^+)}\, \rho^2,\quad \forall \rho \in (0, \tfrac12 r].
\]
Consequently, we derive the estimate
\[
\left(\fint_{B_{\rho}^+} \bigabs{v(x)-v(0)-Dv(0)\cdot x}^{\frac12}dx\right)^{2} \le C [v]_{C^{1,1}(B_{r/2}^+)}\, \rho^2 \le C \left(\frac{\rho}{r}\right)^2 \left(\fint_{B_r^+} \abs{v}^{\frac12}\right)^{2}.
\]
This estimate remains valid when $v$ is replaced by $v-\ell$, where $\ell(x)=\vec p \cdot x +q$ is any affine function.
Hence, for any $\kappa \in (0, \frac12)$ and any affine function $\ell$, we obtain
\begin{equation}			\label{eq1616fri}
\left(\fint_{B_{\kappa r}^+} \bigabs{v(x)-v(0)-Dv(0)\cdot x}^{\frac12}dx\right)^{2}  \le C_0 \kappa^2 \left(\fint_{B_r^+} \abs{v-\ell}^{\frac12}\right)^{2}.
\end{equation}
where $C_0=C_0(d,\lambda, \Lambda)>0$.

Furthermore, from \eqref{eq0410mon} and \eqref{eq0450mon}, it follows that $v$ satisfies the boundary Harnack inequality:
\begin{equation}			\label{eq1414tue}
\inf_{B_{r/2}^+} \,\frac{v(x)}{x_d} \ge \frac{\beta_0}{r} \left(\fint_{B_r^+} \abs{v}^{\frac12} \right)^2,
\end{equation}
where $\beta_0$ is a positive constant depending only on $d$, $\lambda$, and $\Lambda$.
See, for instance, \cite[Theorem 2.1]{Sir3}.

We recall that the following quasi-triangle inequality holds for $L^{1/2}$ quasi-norms:
\begin{equation}			\label{qt}
\norm{v+w}_{L^{1/2}} \le 2 \norm{v}_{L^{1/2}}+2\norm{w}_{L^{1/2}}.
\end{equation}
From \eqref{eq2017fri}, \eqref{eq1234thu}, and Lemma \ref{lem_dp}, we obtain
\begin{equation}			\label{eq2050sat}
\left(\fint_{B_r^+} \abs{w}^{\frac12}\right)^{2} \le C \omega_{\rm coef}(2r) \fint_{B_r^+} u.
\end{equation}
Let us fix $r_1 \in (0,\frac12)$ such that
\begin{equation}			\label{eq0818mon}
C \omega_{\rm coef}(2r_1) \le \frac{\nu_0}{4},
\end{equation}
where $\nu_0$ is the constant from \eqref{eq1532wed}.
Then  \eqref{qt}, \eqref{eq1532wed}, and \eqref{eq2050sat} imply
\begin{equation}			\label{eq1556wed}
\left(\fint_{B_r^+} v^{\frac12} \right)^2 \ge \frac12 \left(\fint_{B_r^+} u^{\frac12} \right)^2 -\left(\fint_{B_r^+} \abs{w}^{\frac12} \right)^2 \ge \frac{\nu_0}{4} \fint_{B_r^+} u\quad \forall r\in (0,r_1).
\end{equation}

\subsection*{Step 3}

We introduce the function
\begin{equation}			\label{eq1100sat}
\varphi(r):=\frac{1}{r} \,\inf_{\substack{\vec p \in \mathbb{R}^d\\q \in \mathbb{R}}} \left(\fint_{B_{r}^+} \bigabs{u(x)-\vec p \cdot x - q}^{\frac12}dx\right)^{2}.
\end{equation}
Applying H\"older's inequality and using the assumption that $u \ge 0$, we obtain the bound
\begin{equation}			\label{eq1523sat}
\varphi(r) \le \frac{1}{r} \left(\fint_{B_r^+} \abs{u}^{\frac12}\right)^2 \le  \frac{1}{r} \fint_{B_{r}^+} u.
\end{equation}

We aim to control the function $\varphi^+(r)$ using an iterative method, as employed in the proof of \cite[Theorem 1.7]{CDKK24}.
To make this article more self-contained, we replicate most of the argument from \cite{CDKK24}.

For $r \in (0,1)$, we decompose $u$ as $u=v+w$, where $w$ solves the problem \eqref{eq0708mon} in the previous step.
Since $u=v+w$, we apply \eqref{qt}, \eqref{eq2017fri}, and \eqref{eq1616fri} to obtain
\begin{align}			\nonumber
\kappa r \varphi(\kappa r) & \le \left(\fint_{B_{\kappa r}^+} \bigabs{u(x)-Dv(0)\cdot x -v(0)}^{\frac12}dx\right)^{2}\\					\nonumber
&\le 2\left(\fint_{B_{\kappa r}^+} \bigabs{v(x)-Dv(0)\cdot x -v(0)}^{\frac12}dx\right)^{2} + 2 \left(\fint_{B_{\kappa r}^+} \abs{w}^{\frac12}\right)^{2}\\
					\nonumber
& \le  C_0 \kappa^2 \left(\fint_{B_r^+} \abs{v-\ell}^{\frac12}\right)^{2} + 2 \kappa^{-2d}\left(\fint_{B_r^+} \abs{w}^{\frac12}\right)^{2}\\	
					\nonumber
& \le 2C_0 \kappa^2 \left(\fint_{B_r^+} \abs{u-\ell}^{\frac12}\right)^{2} + \left(2C_0\kappa^2+2\kappa^{-2d}\right)\left(\fint_{B_r^+} \abs{w}^{\frac12}\right)^{2}\\
					\label{eq0224sat}
& \le 2C_0 \kappa^2 \left(\fint_{B_r^+} \abs{u-\ell}^{\frac12}\right)^{2} + C \left(\kappa^2+\kappa^{-2d}\right) \omega_{\rm coef}(2r) \,\norm{u}_{L^\infty(B_{2r}^+)}.
\end{align}

Now, choose $\kappa=\kappa(d, \lambda,\Lambda) \in (0, \frac12)$ such that $2C_0 \kappa \le  \kappa^{1/2}$.
Since \eqref{eq0224sat} holds for any affine function $\ell$, we then obtain
\begin{equation}			\label{eq1110sat}
\varphi(\kappa r) \le \kappa^{1/2} \varphi(r) + C \omega_{\rm coef}(2r)\,\frac{1}{2r} \norm{u}_{L^\infty(B_{2r}^+)},
\end{equation}
where $C=C(d, \lambda, \Lambda)$.
Let $r_0 \in (0, \frac12)$ be a parameter to be determined later.
We will require $r_0 \le r_1$, where $r_1$ is defined in \eqref{eq0818mon}.

By iterating \eqref{eq1110sat}, we obtain, for $j=1,2,\ldots$,
\begin{equation}			\label{eq1656sat}
\varphi(\kappa^j r_0) \le \kappa^{j/2} \varphi(r_0) + C \sum_{i=1}^{j} \kappa^{(i-1)/2} \omega_{\rm coef}(2\kappa^{j-i} r_0)\,\frac{1}{\kappa^{j-i} r_0}\, \norm{u}_{L^\infty(B_{2\kappa^{j-i} r_0}^+)}.
\end{equation}
To handle the sum in \eqref{eq1656sat}, we define, similarly to \cite[(2.15)]{DK17}, a Dini function
\[
\tilde\omega_{\rm coef}(t):= \sum_{i=1}^\infty \kappa^{i/2} \left\{ \omega_{\rm coef}(2\kappa^{-i} t) [ 2\kappa^{-i} t \le 1] +\omega_{\rm coef}(1)[2\kappa^{-i} t >1]\right\},
\]
where Iverson bracket notation is used, i.e., $[P] = 1$ if $P$ is true, and $[P] = 0$ otherwise.
Also, define
\begin{equation}			\label{eq1244sat}
M_j(r_0):=\max_{0\le i < j}\, \frac{1}{2\kappa^i r_0} \,\norm{u}_{L^\infty(B_{2\kappa^i r_0}^+)} \quad \text{for }\;j=1,2, \ldots.
\end{equation}

Then, utilizing \eqref{eq1523sat}, from \eqref{eq1656sat}, we obtain
\begin{equation}			\label{eq4.38}
\varphi(\kappa^j r_0) \le  \frac{\kappa^{j/2}}{r_0} \fint_{B_{r_0}^+} u+ C M_j(r_0)\, \tilde \omega_{\rm coef}(\kappa^j r_0),\quad j=1,2,\ldots.
\end{equation}

\begin{remark}				\label{rmk1147}
We observe that $\omega_{\rm coef}(2t) \lesssim \tilde \omega_{\rm coef}(t)$.
Furthermore, we will use the following facts (see \cite[Lemma 2.7]{DK17}):
\[
\sum_{j=0}^\infty \tilde \omega_{\rm coef}(\kappa^j r) \lesssim \int_0^{r} \frac{\tilde \omega_{\rm coef}(t)}{t}\,dt,\quad \tilde\omega_{\rm coef}(r)  \lesssim \int_0^{r} \frac{\tilde\omega_{\rm coef}(t)}{t}\,dt.
\]
\end{remark}

For each fixed $r$, the infimum in \eqref{eq1100sat} is realized by some $\vec p \in \mathbb{R}^d$ and $q \in \mathbb{R}$.
For $j=0,1,2,\ldots$, let $\vec p_j \in \mathbb{R}^d$ and $q_j \in \mathbb{R}$ be chosen such that
\begin{equation}			\label{eq0203tue}
\varphi(\kappa^j r_0)= \frac{1}{\kappa^j r_0}
\left(\fint_{B_{\kappa^j r_0}^+} \bigabs{u(x)-\vec p_j \cdot x -q_j}^{\frac12}dx\right)^2.
\end{equation}

Next, observe that for $j=0,1,2,\ldots$, we have
\begin{equation}\label{eq0218tue}
\fint_{B_{\kappa^j r_0}^+} \bigabs{\vec p_j \cdot x +q_j}^{\frac12}dx
\le \fint_{B_{\kappa^j r_0}^+} \bigabs{u-\vec p_j \cdot x -q_j}^{\frac12}dx+
\fint_{B_{\kappa^j r_0}^+} \abs{u}^{\frac12}\le 2\fint_{B_{\kappa^j r_0}^+} \abs{u}^{\frac12},
\end{equation}
where we have used \eqref{eq1523sat}.
Furthermore,
\[
\abs{q_j}^{\frac12} = \bigabs{\vec p_j \cdot x + q_j - 2(\vec p_j\cdot x/2 + q_j)}^{\frac12} \le \bigabs{\vec p_j \cdot x + q_j}^{\frac12} + 2^{\frac12}\, \bigabs{\vec p_j \cdot x/2 + q_j}^{\frac12}.
\]
Additionally,
\[
\fint_{B_{\kappa^j r_0}^+} \bigabs{\vec p_j \cdot x/2 + q_j}^{\frac12}dx  = \frac{2^d}{\abs{B_{\kappa^j r_0}^+}} \int_{B_{\kappa^j r_0/2}^+} \bigabs{\vec p_j \cdot x + q_j}^{\frac12} dx \le 2^d\fint_{B_{\kappa^j r_0}^+} \bigabs{\vec p_j\cdot x + q_j}^{\frac12}dx.
\]
Thus,
\begin{equation}		\label{eq0213tue}
\abs{q_j} \le C \left(\fint_{B_{\kappa^j r_0}^+}\bigabs{\vec p_j \cdot x + q_j }^{\frac12}dx\right)^{2}\le C \left(\fint_{B_{\kappa^j r_0}^+} \abs{u}^{\frac12}\right)^{2},\quad j=0,1,2,\ldots.
\end{equation}

Since $u$ is a continuous function vanishing at $0$, \eqref{eq0213tue} immediately implies
\begin{equation}			\label{eq0215tue}
\lim_{j \to \infty} q_j =0.
\end{equation}

\subsection*{Step 4}
We now estimate $\vec p_j$ and $q_j$.
By the quasi-triangle inequality, we have
\[
\bigabs{(\vec p_j - \vec p_{j-1})\cdot x + (q_j -q_{j-1})}^{\frac12} \le \bigabs{u-\vec p_j \cdot x- q_j}^{\frac12} + \bigabs{u-\vec p_{j-1} \cdot x- q_{j-1}}^{\frac12}.
\]
Taking the average over $B_{\kappa^j r_0}^+$ and using the fact that $\bigabs{B_{\kappa^{j-1} r_0}^+}/ \bigabs{B_{\kappa^j r_0}^+} = \kappa^{-d}$, we obtain
\begin{equation}			\label{eq1949sat}
\frac{1}{\kappa^j r_0}
\left(\fint_{B_{\kappa^j r_0}^+} \bigabs{(\vec p_j - \vec p_{j-1})\cdot x + (q_j -q_{j-1})}^{\frac12}dx\right)^2 \le C \varphi(\kappa^j r_0) + C \varphi(\kappa^{j-1} r_0)
\end{equation}
for $j=1,2,\ldots$, where $C=C(d,\kappa)= C(d, \lambda, \Lambda)$.
Next, observe that
\[
\abs{q_j-q_{j-1}}^{\frac12} = \bigabs{(\vec p_j -\vec p_{j-1})\cdot x + (q_j-q_{j-1}) - 2\{(\vec p_j -\vec p_{j-1})\cdot x/2 + (q_j-q_{j-1})\}}^{\frac12}.
\]
Applying the quasi-triangle inequality, we obtain
\[
\abs{q_j-q_{j-1}}^{\frac12} \le \bigabs{(\vec p_j -\vec p_{j-1})\cdot x + (q_j-q_{j-1})}^{\frac12} + 2^{\frac12} \,\bigabs{(\vec p_j -\vec p_{j-1})\cdot x/2 + (q_j-q_{j-1})}^{\frac12}.
\]
Moreover,
\[
\fint_{B_{\kappa^j r_0}} \bigabs{(\vec p_j -\vec p_{j-1})\cdot x/2 + (q_j-q_{j-1})}^{\frac12}dx \leq
2^d\fint_{B_{\kappa^j r_0}} \bigabs{(\vec p_j -\vec p_{j-1})\cdot x + (q_j-q_{j-1})}^{\frac12}dx.
\]
Combining these estimates, we conclude that
\[
\abs{q_j-q_{j-1}} \le C(d) \left(\fint_{B_{\kappa^j r_0}}\bigabs{(\vec p_j - \vec p_{j-1})\cdot x + (q_j -q_{j-1})}^{\frac12}dx\right)^{2},\quad j=1,2,\ldots.
\]
Substituting this into \eqref{eq1949sat}, we derive
\begin{equation}				\label{eq2247sat}
\frac{1}{\kappa^j r_0} \abs{q_j - q_{j-1}} \le C \varphi(\kappa^j r_0)+C \varphi(\kappa^{j-1} r_0),\quad j=1,2,\ldots.
\end{equation}

Note that for any $\vec p \in \mathbb{R}^d$, we have
\begin{equation}		\label{eq1810mon}
\fint_{B_r^+} \abs{\vec p \cdot x}^{\frac12} dx\ge  \abs{\vec p}^{\frac12} \inf_{\abs{\vec e}=1} \fint_{B_r^+}  \abs{\vec e \cdot x}^{\frac12}dx = \abs{\vec p}^{\frac12} r^{\frac12}  \inf_{\abs{\vec e}=1} \fint_{B_1^+} \abs{\vec e \cdot x}^{\frac12}dx= C\abs{\vec p}^{\frac12} r^{\frac12},
\end{equation}
where $C=C(d)>0$.

Then, by using \eqref{eq1810mon}, the quasi-triangle inequality, \eqref{eq1949sat}, \eqref{eq2247sat},  \eqref{eq4.38}, and the observation that $M_{j-1}(r_0) \le M_j(r_0)$, we obtain
\begin{align}
						\nonumber
\abs{\vec p_j-\vec p_{j-1}} &=\frac{C}{\kappa^j r_0} \left(\fint_{B_{\kappa^j r_0}^+}\bigabs{(\vec p_j - \vec p_{j-1})\cdot x}^{\frac12}dx\right)^{2}\\
						\nonumber
& \le \frac{C}{\kappa^j r_0} \left(\fint_{B_{\kappa^j r_0}^+}\bigabs{(\vec p_j - \vec p_{j-1})\cdot x + (q_j -q_{j-1})}^{\frac12}dx\right)^{2} + \frac{C}{\kappa^j r_0} \bigabs{q_j - q_{j-1}}\\
								\label{eq2316sat}
&\le \frac{C\kappa^{j/2}}{r_0}  \fint_{B_{r_0}^+} u+ CM_j(r_0)\left\{\tilde \omega_{\rm coef}(\kappa^j r_0) + \tilde \omega_{\rm coef}(\kappa^{j-1} r_0)\right\}.
\end{align}

To estimate $\abs{\vec p_0}$, we proceed similarly to \eqref{eq2316sat} by applying \eqref{eq0218tue} and \eqref{eq0213tue} with $j=0$, and using H\"older's inequality to obtain
\begin{equation}			\label{eq0230tue}
\abs{\vec p_0} \le \frac{C}{r_0} \fint_{B_{r_0}^+} u,
\end{equation}
where $C=C(d, \lambda, \Lambda)$.

For $k>l\ge 0$, we derive from \eqref{eq2316sat} and the definition of $M_{j}(r_0)$ that
\begin{align}
						\nonumber
\abs{\vec p_k -\vec p_l} \le \sum_{j=l}^{k-1}\, \abs{\vec p_{j+1}-\vec p_j} &\le  \sum_{j=l}^{k-1}  \frac{C\kappa^{(j+1)/2}}{r_0} \fint_{B_{r_0}^+} u+ CM_k(r_0) \sum_{j=l}^{k} \tilde \omega_{\rm coef}(\kappa^j r_0) \\
						\label{eq0946tue}
&\le  \frac{C \kappa^{(l+1)/2}}{(1-\kappa^{1/2})r_0} \fint_{B_{r_0}^+} u+ CM_k(r_0) \int_0^{\kappa^l r_0} \frac{\tilde \omega_{\rm coef}(t)}{t}\,dt,
\end{align}
where we used Remark~\ref{rmk1147}.
In particular, by taking $k=j$ and $l=0$ in \eqref{eq0946tue}, and using \eqref{eq0230tue}, we obtain for $j=1,2,\ldots$ that
\begin{equation}			\label{eq0900tue}
\abs{\vec p_j} \le \abs{\vec p_j-\vec p_0} + \abs{\vec p_0} \le \frac{C}{r_0} \fint_{B_{r_0}^+} u+ CM_j(r_0) \int_0^{r_0} \frac{\tilde \omega_{\rm coef}(t)}{t}\,dt.
\end{equation}

Similarly, we obtain from \eqref{eq2247sat} that for $k>l\ge 0$, we have
\begin{equation}			\label{eq0947tue}
\abs{q_k - q_l} \le C \frac{\kappa^{3(l+1)/2}}{1-\kappa^{3/2}} \fint_{B_{r_0}^+} u+ C \kappa^l r_0 M_k(r_0) \int_0^{\kappa^l r_0} \frac{\tilde \omega_{\rm coef}(t)}{t}\,dt.
\end{equation}

\subsection*{Step 5}
We will derive improved estimates for $q_j$ using the following lemma, where we set
\[
\tilde{u}(x):=u(x)-\vec p_j\cdot x-q_j.
\]
We note that $\tilde{u}$ satisfies
\begin{align*}
L^{*} \tilde{u}=-\dv^2 (\mathbf{A} (\vec p_j \cdot x + q_j))+\dv((\vec p_j \cdot x + q_j) \vec b) -(\vec p_j \cdot x + q_j)c&\quad\text{in }\; B_2^{+}\\
(\mathbf{A} \nu \cdot \nu)\tilde{u}= -\mathbf{A} (\vec p_j \cdot x + q_j) \nu \cdot \nu &\quad\text{on }\; T_2.
\end{align*}

\begin{lemma}			\label{lem0921tue}
For $0<r \le \frac12$, we have
\[
\sup_{B_r^+}\, \abs{u-\vec p_j\cdot x-q_j} \le C \left\{ \left( \fint_{B_{2r}^+} \abs{u-\vec p_j\cdot x-q_j}^{\frac12} \right)^{2} +(r\abs{\vec p_j}+\abs{q_j}) \int_0^{2r} \frac{\omega_{\rm coef}(t)}{t}\,dt \right\},
\]
where $C=C(d, \lambda, \Lambda, \omega_{\rm coef})$.
\end{lemma}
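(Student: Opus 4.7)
The plan is to view $\tilde u := u - \vec p_j\cdot x - q_j$ as the solution of an inhomogeneous double divergence equation of the form \eqref{mostgeneq} and invoke the boundary $L^\infty$-estimate for such equations (Theorem 1.8 and Lemma 2.26 of \cite{DEK18}, as further refined in \cite{CDKK24}). Since both $Lu = L^*u$-type cancellations and the affinity of $\vec p_j\cdot x + q_j$ will be used, the starting point is the identification
\[
L^*\tilde u = \dv^2 \mathbf f + \dv \vec g + h\;\text{ in }\;B_2^+,\qquad (\mathbf A\nu\cdot\nu)\tilde u = \mathbf f\,\nu\cdot\nu\;\text{ on }\;T_2,
\]
with $\mathbf f = -\mathbf A(\vec p_j\cdot x + q_j)$, $\vec g = (\vec p_j\cdot x + q_j)\vec b$, and $h = -(\vec p_j\cdot x + q_j)c$. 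The boundary condition appears because $\tilde u$ is not the original supersolution but its affine perturbation, and the linear function itself contributes a trace through $\mathbf A\nu\cdot\nu$.

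Next, applying the boundary $L^\infty$ estimate from \cite{DEK18}/\cite{CDKK24} at scales $r$ and $2r$ yields
\[
\sup_{B_r^+}|\tilde u| \le C\left(\fint_{B_{2r}^+}|\tilde u|^{1/2}\right)^2 + C\cdot\mathscr I(r),
\]
where $\mathscr I(r)$ is a data term involving a Dini-type integral of the DMO modulus of $\mathbf f$ together with the Morrey-type moduli of $\vec g$ and $h$. Substituting the explicit forms and using the trivial bound $|\vec p_j\cdot x + q_j|\le 2r|\vec p_j|+|q_j|$ on $B_{2r}^+$, I would bound
\[
\omega_{\mathbf f}(\rho) \lesssim (r|\vec p_j|+|q_j|)\,\omega_{\mathbf A}(\rho) + \rho|\vec p_j|,
\]
where the second term comes from the oscillation of the linear factor itself; at each scale $\rho$ the identity $\dv^2(\bar{\mathbf A}(\vec p_j\cdot x + q_j))=0$ lets us replace $\mathbf f$ by $(\mathbf f)-(\mathbf f)_{\text{loc}}$ inside the estimate, isolating exactly this modulus. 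For $\vec g,h$, the Morrey-type moduli factor as $(2r|\vec p_j|+|q_j|)$ times the averages of $|\vec b|$ and $|c|$ that define the last two terms of $\omega_{\rm coef}$ in \eqref{omega_coef}.

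Assembling these pieces, the scaled contributions integrate to $(r|\vec p_j|+|q_j|)\int_0^{2r}\omega_{\rm coef}(t)/t\,dt$, where the stray term $\rho|\vec p_j|$ from the linear factor is absorbed after dividing by $t$ and integrating (using $\int_0^{2r}\!d\rho = 2r$ and the lower bound of $\omega_{\rm coef}$ by $\omega_{\mathbf A}$), while Remark~\ref{rmk1147} handles the assembly of the DMO contribution. Combining the two displays proves the lemma. The main obstacle is the careful bookkeeping in the data estimate: one must track how the various moduli of the three inhomogeneities, each multiplied by a different power of $r$ implicit in $\omega_{\rm coef}$, collapse into the single combined Dini integral with the factor $(r|\vec p_j|+|q_j|)$ outside, and one must ensure that the cancellation at each dyadic scale is performed against the local average of $\mathbf A$ so that only the DMO modulus — not $\|\mathbf A\|_\infty$ — appears in the final bound.
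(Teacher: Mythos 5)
There is a genuine gap in the treatment of the data term coming from the affine function. You bound the mean oscillation of $\mathbf f=-\mathbf A(\vec p_j\cdot x+q_j)$ by $(r\abs{\vec p_j}+\abs{q_j})\,\omega_{\mathbf A}(\rho)+\rho\abs{\vec p_j}$ and then claim the stray term $\rho\abs{\vec p_j}$ is ``absorbed'' after dividing by $t$ and integrating, using $\omega_{\rm coef}\ge \omega_{\mathbf A}$. This absorption fails: the stray term contributes $\int_0^{2r}\frac{\rho\abs{\vec p_j}}{\rho}\,d\rho \simeq r\abs{\vec p_j}$, and there is no way to dominate $r\abs{\vec p_j}$ by $(r\abs{\vec p_j}+\abs{q_j})\int_0^{2r}\frac{\omega_{\rm coef}(t)}{t}\,dt$, since the Dini integral tends to $0$ as $r\to 0$ (a lower bound of $\omega_{\rm coef}$ by $\omega_{\mathbf A}$ gives nothing, as $\omega_{\mathbf A}$ is itself Dini). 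So your argument only yields the weaker bound with an extra additive term $C r\abs{\vec p_j}$, which is not the statement of the lemma; moreover that extra term is fatal downstream, e.g.\ in the derivation of \eqref{eq7.51} and in Step 7, where the whole point is that the affine part enters only multiplied by a quantity vanishing as the scale shrinks (otherwise $u$ would not be shown differentiable at $0$ and the lower bound for $\abs{\hat{\vec p}}$ would be lost).

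The paper avoids this by never taking the mean oscillation of the product $\mathbf A(\vec p_j\cdot x+q_j)$. At each point $x_0$ and scale $t$ one uses $\dv^2\bigl(\bar{\mathbf A}(\vec p_j\cdot x+q_j)\bigr)=0$, where $\bar{\mathbf A}$ is the \emph{local average of $\mathbf A$} on $B_t(x_0)$ (or $B_{2t}^+(x_0)$), to rewrite the source as $\dv^2\bigl((\bar{\mathbf A}-\mathbf A)(\vec p_j\cdot x+q_j)\bigr)$; the affine factor then enters only through its sup norm, so the relevant average is bounded by $(2r\abs{\vec p_j}+\abs{q_j})\,\omega_{\mathbf A}(t)$, with no $t\abs{\vec p_j}$ term, and similarly for $\vec g$, $h$ via \eqref{omega_coef}. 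Your sentence about replacing $\mathbf f$ by $\mathbf f-(\mathbf f)_{\rm loc}$ subtracts the average of the \emph{product}, which is exactly what reintroduces the oscillation of the linear factor; the correct subtraction is $\bar{\mathbf A}\,(\vec p_j\cdot x+q_j)$, i.e.\ average-of-$\mathbf A$ times the full affine function. With that modification one cannot simply quote a black-box $L^\infty$ estimate in terms of $\omega_{\mathbf f}$; one has to run the decomposition $\tilde u=v_1+v_2$ and the iteration of \cite{KL21}, \cite{DEK18} at every scale (interior and boundary cases), which is what the paper does.
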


\begin{proof}
In the case when the lower order terms $\vec b$ and $c$ are not present, the result follows from \cite[Lemma 2.3]{KL21}, which itself refines the proof of \cite[Lemma 2.26]{DEK18}.
As in \cite{KL21}, our goal is to control the following quantity for $x_0 \in B_{3r/2}^+$ and $0<t\le r/4$:
\[
\phi(x_0, t):= \inf_{q \in \mathbb{R}} \left(\fint_{B_t(x_0) \cap B_{2r}^+} \abs{\tilde{u}-q}^{\frac12} \right)^2,\quad\mbox{where }\; \tilde u=u-\vec p_j\cdot x-q_j.
\]

We consider two cases: $B_t(x_0) \cap \partial\mathbb{R}^d_+ = \emptyset$ and $B_t(x_0) \cap \partial\mathbb{R}^d_+ \neq \emptyset$.

\smallskip
\noindent
\textit{Case 1:} $B_t(x_0) \cap \partial\mathbb{R}^d_+ = \emptyset$.
\smallskip

In this case, we have $B_t(x_0) \subset B_{2r}^+$, and we closely follow the proof of \cite[Lemma 2.2]{KL21}.
For $x_0 \in B_{3r/2}$ and $0<t \le r/4$, let $\bar{\mathbf A}$ denote the average of $\mathbf{A}$ over the ball $B_t(x_0)$.
Define
\begin{equation}			\label{eq0721sun}
\mathbf f= (\bar{\mathbf A}-\mathbf{A}) (\vec p_j \cdot x + q_j),\quad
\vec g=(\vec p_j \cdot x + q_j) \vec b,\quad
 h=-(\vec p_j \cdot x + q_j)c.
\end{equation}
We decompose $\tilde{u}$ as $\tilde{u}=v_1+v_2$, where $v_1 \in L^p(B_t(x_0))$ for some $p>1$ is the solution of the problem:
\[
\left\{
\begin{aligned}
\dv^2(\bar{\mathbf A}v_1) &= \dv^2 \left(\mathbf{f}-(\mathbf{A}- \bar{\mathbf A})\tilde{u}\right)+\dv(\vec g+\vec b \tilde{u})+h-c\tilde{u}\;\mbox{ in }\;B_t(x_0),\\
(\bar{\mathbf A}\nu\cdot \nu)v_1&=\left(\mathbf{f}-(\mathbf{A}- \bar{\mathbf A})\tilde{u}\right)\nu\cdot \nu \;\mbox{ on }\;\partial B_t(x_0).
\end{aligned}
\right.
\]
Then, by Lemma~\ref{lem01} via rescaling, we obtain the following estimate:
\begin{multline}				\label{eq2025mon}
\left(\fint_{B_t(x_0)} \abs{v_1}^{\frac12} \right)^{2} \lesssim \fint_{B_t(x_0)}\abs{\mathbf f}+ \left(\fint_{B_t(x_0)}\abs{\mathbf{A}-\bar{\mathbf A}}\right)\norm{\tilde{u}}_{L^\infty(B_t(x_0))}+t \fint_{B_t(x_0)} \abs{\vec g}\\
+ \left(t \fint_{B_t(x_0)}\abs{\vec b}\right)\norm{\tilde{u}}_{L^\infty(B_t(x_0))} + t^2 \fint_{B_t(x_0)} \abs{h} +  \left(t^2 \fint_{B_t(x_0)}\abs{c}\right)\norm{\tilde{u}}_{L^\infty(B_t(x_0))}.
\end{multline}
By definitions of $\mathbf f$, $\vec g$, and $h$, we obtain
\[
\fint_{B_t(x_0)}\abs{\mathbf f} +t\fint_{B_t(x_0)}\abs{\vec g} + t^2\fint_{B_t(x)}\abs{h} \le (2r \abs{\vec p_j}+\abs{q_j}) \left( \fint_{B_t(x_0)}\abs{\mathbf{A}-\bar{\mathbf A}}+t \fint_{B_t(x_0)}\abs{\vec b}+t^2 \fint_{B_t(x_0)}\abs{c} \right).
\]
Thus, using the definition \eqref{omega_coef}, we deduce from \eqref{eq2025mon} that
\begin{equation}				\label{eq2026mon}
\left(\fint_{B_t(x_0)} \abs{v_1}^{\frac12} \right)^{2} \le C \omega_{\rm coef}(t)\norm{\tilde{u}}_{L^\infty(B_t(x_0))} + C(r\abs{\vec a_j}+\abs{b_j}) \omega_{\rm coef}(t),
\end{equation}
where $C=C(d, \lambda, \Lambda)$.
We note that \eqref{eq2026mon} corresponds to (A.1) in \cite{KL21}.

On the other hand, note that $v_2=\tilde{u}-v_1$ satisfies
\[
\dv^2(\bar{\mathbf A}v_2) = 0 \;\;\text{ in }\;  B_t(x_0).
\]
Thus, $v_2$ satisfies the estimate corresponding to (A.2) in \cite{KL21}.
The remainder of the proof follows the same arguments as in \cite[Lemma 2.2]{KL21}.

\smallskip
\noindent
\textit{Case 2:} $B_t(x_0) \cap \partial\mathbb{R}^d_+ \neq \emptyset$.
\smallskip

In this case, we replace $B_t(x_0) \cap B_{2r}^+$ with $B_{2t}^{+}(\bar x_0)$, where $\bar x_0$ is the projection of $x_0$ onto $\partial \mathbb{R}^d_+$, and replicate the argument from \cite{DEK18}.

For $x_0 \in T_{3r/2}$ and $0<t \le r/4$, let $\bar{\mathbf A}$ denote the averages of $\mathbf{A}$ over the half-ball $B_{2t}^+(x_0)$.
Let $\mathbf f$, $\vec g$, and $h$ be defined as in \eqref{eq0721sun}, and decompose $\tilde u$ as $\tilde u=v_1+v_2$, where $v_1 \in L^p(\mathcal{D}_t(x_0))$ for some $p>1$ is the solution of the following problem:
\[
\left\{
\begin{aligned}
\dv^2(\bar{\mathbf A}v_1) &= \dv^2 \left(\mathbf{f}-(\mathbf{A}- \bar{\mathbf A})\tilde{u}\right)+\dv(\vec g+\vec b \tilde{u})+h-c\tilde{u}\;\mbox{ in }\;\mathcal{D}_t(x_0),\\
(\bar{\mathbf A}\nu\cdot \nu)v_1&=\left(\mathbf{f}-(\mathbf{A}- \bar{\mathbf A}) \tilde{u}\right)\nu\cdot \nu \;\mbox{ on }\;\partial \mathcal{D}_t(x_0).
\end{aligned}
\right.
\]
Then, similar to \eqref{eq2025mon}, we obtain the following estimate:
\begin{multline}				\label{eq1458mon}
\left(\fint_{B_t^+(x_0)} \abs{v_1}^{\frac12} \right)^{2} \lesssim \fint_{B_{2t}^+(x_0)}\abs{\mathbf f}+ \left(\fint_{B_{2t}^+(x_0)}\abs{\mathbf{A}-\bar{\mathbf A}}\right)\norm{\tilde{u}}_{L^\infty(B_{2t}^+(x_0))}
+ t \fint_{B_{2t}^+(x_0)} \abs{\vec g}\\
+\left(t \fint_{B_{2t}^+(x_0)}\abs{\vec b}\right)\norm{\tilde{u}}_{L^\infty(B_{2t}^+(x_0))} + t^2 \fint_{B_{2t}^+(x_0)} \abs{h} +  \left(t^2 \fint_{B_{2t}^+(x_0)}\abs{c}\right)\norm{\tilde{u}}_{L^\infty(B_{2t}^+(x_0))},
\end{multline}
where we used $B_{t}^+(x_0) \subset \mathcal{D}_t(x_0) \subset B_{2t}^+(x_0)$.

Thus, from \eqref{eq1458mon}, similarly to \eqref{eq2026mon}, we obtain the following:
\[
\left(\fint_{B^+_t(x_0)} \abs{v_1}^{\frac12} \right)^{2} \le C \omega_{\rm coef}(2t)\,\norm{\tilde{u}}_{L^\infty(B_t^{+}(x_0))}+ C(r\abs{\vec p_j}+\abs{q_j}) \omega_{\rm coef}(2t),
\]
where $C=C(d, \lambda,\Lambda)$.

On the other hand, note that $v_2=\tilde{u}-v_1$ satisfies
\[
\dv^2(\bar{\mathbf A}v_2) =0 \;\text{ in }\; B^+_t(x_0),\quad v_2=\bar{\mathbf A}(\vec p_j \cdot x + q_j) \nu\cdot \nu\;\text{ on }\;T_t(x_0).
\]
Applying \cite[Lemma 4.6]{CDKK24} with scaling, we obtain
\[
\norm{D^2 v_2}_{L^\infty(B_{t/2}^+(x_0))}\le C t^{-2-2d} \norm{v_2}_{L^{1/2}(B_{t}^+(x_0))}.
\]
Combining this with the interpolation inequality yields
\[
\norm{D v_2}_{L^\infty(B_{t/2}^+(x_0))}\le C t^{-1-2d}\norm{v_2}_{L^{1/2}(B_{t}^+(x_0))}.
\]
The above estimates remain valid if $v_2$ is replaced by $v_2 - \ell$ for any affine function $\ell$.
In particular, we have
\[
\norm{D v_2}_{L^\infty(B_{t/2}^+(x_0))}\le C t^{-1} \norm{v_2-q}_{L^{1/2}(B_{t}^+(x_0))},\quad \forall q \in \mathbb{R}.
\]
The remainder of the proof involves a standard modification of the argument in \cite[Lemma 2.26]{DEK18}, which we leave to the reader.
\end{proof}

By \eqref{eq0203tue} and \eqref{eq4.38}, we obtain
\begin{equation}				\label{eq1150sun}
\left(\fint_{B_{\kappa^j r_0}^+} \bigabs{u(x)-\vec p_j \cdot x -q_j}^{\frac12}dx\right)^2 \le  \kappa^{3j/2} \fint_{B_{r_0}^+} u+ C M_j(r_0)\, \kappa^j r_0 \tilde \omega_{\rm coef}(\kappa^j r_0).
\end{equation}
Then, applying Lemma \ref{lem0921tue} and \eqref{eq1150sun}, we obtain
\begin{multline}				\label{eq0934wed0}
 \norm{u-\vec p_j\cdot x-q_j}_{L^\infty(B_{\kappa^j r_0/2}^+)} \le C\kappa^{3j/2} \fint_{B_{r_0}^+} u + C\kappa^j r_0 M_j(r_0) \tilde\omega_{\rm coef}(\kappa^j r_0)\\
+ C \left(\kappa^j r_0\abs{\vec p_j}+\abs{q_j}\right) \int_0^{\kappa^j r_0} \frac{\omega_{\rm coef}(t)}{t}\,dt,\quad j=1,2,\ldots,
\end{multline}
where $C=C(d, \lambda, \Lambda, \omega_{\rm coef})$.

In particular, taking $x=0$ in \eqref{eq0934wed0}, we infer that
\begin{multline*}
\abs{q_j} \le C\kappa^{3j/2} \fint_{B_{r_0}^+} u + C\kappa^j r_0 M_j(r_0) \tilde\omega_{\rm coef}(\kappa^j r_0) +C \abs{\vec p_j}\kappa^j r_0 \int_0^{\kappa^j r_0} \frac{\omega_{\rm coef}(t)}{t}\,dt\\
+C \abs{q_j} \int_0^{\kappa^j r_0} \frac{\omega_{\rm coef}(t)}{t}\,dt.
\end{multline*}
Let us fix $r_2>0$ such that
\begin{equation}			\label{eq0903tue}
C \int_0^{r_2} \frac{\omega_{\rm coef}(t)}{t}\,dt\le \frac12.
\end{equation}
Note that $r_2$ depends solely on $d$, $\lambda$, $\Lambda$, and $\omega_{\rm coef}$.

At this stage, we have not yet chosen $r_0 \in (0,\frac12]$.
We will require $r_0 \le r_2$ so that we obtain
\[
\abs{q_j} \le  C\kappa^{3j/2} \fint_{B_{r_0}^+} u + C\kappa^j r_0 M_j(r_0) \tilde\omega_{\rm coef}(\kappa^j r_0)
+C\abs{\vec p_j}\kappa^j r_0\int_0^{\kappa^j r_0} \frac{\omega_{\rm coef}(t)}{t}\,dt.
\]
Together with \eqref{eq0900tue} and Remark~\ref{rmk1147}, this leads to
\begin{equation}			\label{eq7.51}
\abs{q_j} \le C \kappa^{j} r_0 \left\{\kappa^{j/2}+\int_0^{\kappa^j r_0} \frac{\omega_{\rm coef}(t)}{t}\,dt\right\}\frac{1}{r_0} \fint_{B_{r_0}^+}u +C\kappa^j r_0 M_j(r_0) \int_0^{\kappa^j r_0} \frac{\tilde \omega_{\rm coef}(t)}{t}\,dt.
\end{equation}

\subsection*{Step 6}
We now proceed to demonstrate that the sequences $\{\vec p_j\}$ and $\{q_j\}$ are Cauchy and therefore converge.
Using \eqref{eq0934wed0}, \eqref{eq0900tue}, \eqref{eq7.51}, \eqref{eq0903tue}, and Remark~\ref{rmk1147}, we obtain
\begin{multline}				\label{eq1920thu}
\norm{u-\vec p_j\cdot x-q_j}_{L^\infty(B_{\kappa^j r_0/2}^+)}
\le C \kappa^j r_0 \left\{\kappa^{j/2}+ \int_0^{\kappa^j r_0} \frac{\omega_{\rm coef}(t)}{t}\,dt \right\}\frac{1}{r_0} \fint_{B_{r_0}^+} u\\
+C\kappa^j r_0 M_j(r_0)  \int_0^{\kappa^j r_0} \frac{\tilde \omega_{\rm coef}(t)}{t}\,dt.
\end{multline}
Then, from \eqref{eq1920thu}, \eqref{eq0900tue}, \eqref{eq7.51}, and Remark~\ref{rmk1147}, we infer that
\begin{equation}				\label{eq1555sun}
\frac {1}{\kappa^{j} r_0}\norm{u}_{L^\infty(B_{\kappa^j r_0/2}^+)}
\le \frac C {r_0} \fint_{B_{r_0}^+} u + C M_j(r_0) \int_0^{r_0} \frac{\tilde \omega_{\rm coef}(t)}{t}\,dt, \quad j=1,2,\ldots,
\end{equation}
where $C=C(d, \lambda, \Lambda, \omega_{\rm coef})$.

\begin{lemma}				\label{lem1548sun}
There exists $r_3 =r_3(d, \lambda, \Lambda, \omega_{\rm coef}) \in (0,\frac12]$ such that
\[
\sup_{j \ge 1} M_j(r_0) =\sup_{i \ge 0} \frac{1}{2\kappa^i r_0} \,\norm{u}_{L^\infty(B_{2\kappa^i r_0}^+)} \le \frac{C}{r_0} \fint_{B_{4r_0}^+} u,\quad \forall r_0 \in (0,r_3]
\]
where $C=C(d, \lambda, \Lambda, \omega_{\rm coef})$.
\end{lemma}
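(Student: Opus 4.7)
My plan is to derive from \eqref{eq1555sun} a self-improving inequality of the form $M_j(r_0)\le X(r_0)+\varepsilon(r_0)M_j(r_0)$ uniformly in $j$, and absorb the second term by requiring $\varepsilon(r_0)\le 1/2$. This is possible because $\tilde\omega_{\rm coef}$ is a Dini function, so $\eta(r_0):=\int_0^{r_0}\tilde\omega_{\rm coef}(t)/t\,dt$ tends to $0$ as $r_0\to 0$.

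The first step is to match the scale of $\|u\|_{L^\infty(B_{\kappa^j r_0/2}^+)}$ in \eqref{eq1555sun} with that of $a_i:=(2\kappa^i r_0)^{-1}\|u\|_{L^\infty(B_{2\kappa^i r_0}^+)}$ appearing in $M_j$. Since $\kappa$ was fixed with $2C_0\kappa\le\kappa^{1/2}$ just before \eqref{eq1110sat}, one has $\kappa\le 1/4$ and thus $4\kappa^2\le 1$, so that $B_{2\kappa^i r_0}^+\subset B_{\kappa^{i-2}r_0/2}^+$ for every $i\ge 2$. For $i\ge 3$, applying \eqref{eq1555sun} with $j$ replaced by $i-2$, using this inclusion, and dividing by $2\kappa^i r_0$ gives
\[
a_i\le \frac{\kappa^{-2}}{2}\left(\frac{C}{r_0}\fint_{B_{r_0}^+}u+CM_{i-2}(r_0)\,\eta(r_0)\right).
\]
For $i\in\{0,1,2\}$, the boundary $L^\infty$ estimate \eqref{eq1234thu} applied on $B_{2\kappa^i r_0}^+$ together with an obvious volume comparison (using $u\ge 0$) yields directly $a_i\le (C'/r_0)\fint_{B_{4r_0}^+}u$. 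Since $B_{r_0}^+\subset B_{4r_0}^+$ and $u\ge 0$, one can also replace $\fint_{B_{r_0}^+}u$ by $\fint_{B_{4r_0}^+}u$ in the display above at the cost of a constant, so both cases combine into
\[
a_i\le \frac{C_*}{r_0}\fint_{B_{4r_0}^+}u+C_*\,\eta(r_0)\,M_{\max(0,i-2)}(r_0),\qquad i\ge 0,
\]
with the convention $M_0(r_0)=0$.

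Fix $j\ge 1$. For each $0\le i<j$ one has $\max(0,i-2)\le j$, so the monotonicity $M_{j'}(r_0)\le M_j(r_0)$ whenever $j'\le j$ yields $M_{\max(0,i-2)}(r_0)\le M_j(r_0)$. Taking the maximum of the last display over $0\le i<j$ gives
\[
M_j(r_0)\le \frac{C_*}{r_0}\fint_{B_{4r_0}^+}u+C_*\,\eta(r_0)\,M_j(r_0).
\]
I then choose $r_3=r_3(d,\lambda,\Lambda,\omega_{\rm coef})\in(0,\tfrac12]$ with $C_*\,\eta(r_3)\le\tfrac12$. For $r_0\in(0,r_3]$, the quantity $M_j(r_0)$ is a finite maximum of sup-norms of the continuous function $u$ on compact subsets of $B_4^+$, hence finite, so the last term can be absorbed into the left-hand side, giving $M_j(r_0)\le (2C_*/r_0)\fint_{B_{4r_0}^+}u$ uniformly in $j$. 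Taking $\sup_{j\ge 1}$ completes the proof. The main obstacle is the scale bookkeeping required to align the two families of half-balls used by \eqref{eq1555sun} and by the definition of $M_j$: this works only because $\kappa$ is small enough to provide the two-index shift $j\mapsto i-2$; once the shift is in place, the absorption is standard.
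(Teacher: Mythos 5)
Your proposal is correct and follows essentially the same route as the paper: the paper likewise feeds \eqref{eq1555sun} back into the definition of $M_j$ via the half-ball inclusion (with a one-index shift, $c_{j+1}\lesssim r_0^{-1}\fint_{B_{r_0}^+}u+M_j(r_0)\int_0^{r_0}\tilde\omega_{\rm coef}(t)\,t^{-1}dt$), handles the initial terms $c_0,c_1$ by the boundary $L^\infty$ estimate, and chooses $r_0$ so small that the Dini-integral factor is at most $\tfrac12$; the only difference is that the paper closes the argument by an induction producing a geometric series rather than your direct absorption using finiteness of $M_j$, which is a cosmetic distinction. Just note that, as in the paper's proof, $r_3$ should also be taken no larger than $r_2$ from \eqref{eq0903tue} (so that \eqref{eq1555sun} is indeed available for all $r_0\in(0,r_3]$), and that finiteness of $M_j$ is guaranteed by \eqref{eq1234thu} since the half-balls $B_{2\kappa^i r_0}^+$ touch the flat boundary and are not compactly contained in $B_4^+$.
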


\begin{proof}
We denote
\[
\quad c_i= \frac{1}{2\kappa^i r_0} \norm{u}_{L^\infty(B_{2\kappa^i r_0}^+)},\quad i=0,1,2,\ldots.
\]
Then, by \eqref{eq1244sat}, we clearly have
\begin{equation}			\label{eq0906tue}
M_1(r_0)=c_0\quad\text{and}\quad
M_{j+1}(r_0)=\max(M_j(r_0), c_j),\quad j=1,2,\ldots.
\end{equation}
Recall that we have already chosen $\kappa=\kappa(d, \lambda, \Lambda) \in (0,\frac12)$, and thus, from \eqref{eq1555sun}, we conclude that there exists a constant $C=C(d, \lambda, \Lambda, \omega_{\rm coef})>0$ such that
\begin{equation}			\label{eq0848tue}
c_{j+1} \le C \left\{\frac{1}{r_0} \fint_{B_{r_0}^+}u+ M_j(r_0) \int_0^{r_0} \frac{\tilde \omega_{\rm coef}(t)}{t}\,dt \right\},\quad j=1,2,\ldots.
\end{equation}
By \cite[Theorem 1.10]{DEK18}, we have
\begin{equation}			\label{eq0849tue}
c_1 =\frac{1}{2\kappa r_0} \norm{u}_{L^\infty(B_{2\kappa r_0}^+)} \le \frac{1}{2\kappa r_0} \norm{u}_{L^\infty(B_{2r_0}^+)} =\frac{1}{\kappa} c_0  \le \frac{C}{r_0} \fint_{B_{4r_0}^+} u,
\end{equation}
where $C=C(d, \lambda, \Lambda, \omega_{\rm coef})$.

From \eqref{eq0848tue} and \eqref{eq0849tue}, we see that there is $\gamma=\gamma(d, \lambda, \Lambda,\omega_{\rm coef})>0$ such that
\[
c_0,\, c_1 \le \frac{\gamma}{r_0} \fint_{B_{4r_0}^+} u \; \text{ and }\; c_{j+1} \le \gamma \left\{\frac{1}{r_0} \fint_{B_{4r_0}^+}u+ M_j(r_0) \int_0^{r_0} \frac{\tilde \omega_{\rm coef}(t)}{t}\,dt \right\},\;\; j=1,2,\ldots.
\]

Now, we fix a number $\rho_0 \in (0,\frac12]$ such that
\[
\gamma \int_0^{\rho_0} \frac{\tilde \omega_{\rm coef}(t)}{t}\,dt \le \frac12
\]
and also $\rho_0 \le r_2$, where $r_2$ is in \eqref{eq0903tue}.
Then, if $r_0 \le \rho_0$, we have
\begin{equation}			\label{eq2114mon}
c_0, c_1 \le \frac{\gamma}{r_0} \fint_{B_{4r_0}^+}u \; \text{ and }\; c_{j+1} \le \frac{\gamma}{r_0} \fint_{B_{4r_0}^+}\abs{u} +\frac12 M_j(r_0),\;\; j=1,2,\ldots.
\end{equation}

By induction, it follows form \eqref{eq0906tue} and \eqref{eq2114mon} that
\[
c_{2k},\,c_{2k+1},\,M_{2k+1}(r_0),\,M_{2k+2}(r_0) \le \frac{\gamma}{r_0} \fint_{B_{4r_0}^+}u \,\left( \sum_{i=0}^{k} \frac{1}{2^i}\right),\quad k=0,1,2,\ldots,
\]
and the lemma follows.
\end{proof}

\begin{remark}			\label{rmk1203tue}
The proof of Lemma \ref{lem1548sun} actually shows that for $0<\rho \le r_0$, we have
\[
\sup_{0<r\le \rho} \,\frac{1}{r}  \norm{u}_{L^\infty(B_r^+)} \le \frac{C}{\rho} \fint_{B_{4\rho}^+} \abs{u},
\]
where $C=C(d, \lambda, \Lambda, \omega_{\rm coef})$.
To see this, we repeat the same argument with $\rho$ in place of $r_0$, and use the following inequality:
\[
\frac{1}{\kappa^{j}\rho} \norm{u}_{L^\infty(B_{\kappa^{j+1} \rho}^+)} \le \sup_{\kappa^{j+1}\rho \le r \le \kappa^j \rho}  \frac{1}{r} \norm{u}_{L^\infty(B_r^+)} \le \frac{1}{\kappa^{j+1}\rho} \norm{u}_{L^\infty(B_{\kappa^j \rho}^+)}.
\]
\end{remark}

Hereafter, we will further impose the condition that $r_0 \le r_3$, while still retaining the freedom to choose $r_0$.
Now, Lemma \ref{lem1548sun} and \eqref{eq0946tue} imply that the sequence $\set{\vec p_j}$ is a Cauchy sequence in $\mathbb{R}^d$, and thus $\vec p_j \to \hat{\vec p}$ for some $\hat{\vec p} \in \mathbb{R}^d$.
Moreover, by taking the limit as $k\to \infty$ in \eqref{eq0946tue} and \eqref{eq0947tue} (while recalling Lemma \ref{lem1548sun} and \eqref{eq0215tue}), respectively, and then setting $l=j$, we obtain the following estimates:
\begin{equation}	\label{eq1806sun}
\begin{aligned}
\abs{\vec p_j-\hat{\vec p}} &\le C \left\{ \kappa^{j/2}+ \int_0^{\kappa^j r_0} \frac{\tilde \omega_{\rm coef}(t)}{t}\,dt\right\}\frac{1}{r_0} \fint_{B_{4r_0}^+}u,\\
\abs{q_j}  &\le C \kappa^j \left\{\kappa^{j/2} +\int_0^{\kappa^j r_0} \frac{\tilde \omega_{\rm coef}(t)}{t}\,dt \right\}\fint_{B_{4r_0}^+} u,
\end{aligned}
\end{equation}
where $C=C(d, \lambda, \Lambda, \omega_{\rm coef})$.
It is worth noting that the constants in \eqref{eq1806sun} remain bounded as $r_0 \to 0$.

By the triangle inequality, \eqref{eq1920thu}, \eqref{eq1806sun}, and Remark~\ref{rmk1147}, we obtain
\begin{align}			\nonumber
\norm{u-\hat{\vec p}\cdot x}_{L^\infty(B_{\kappa^j r_0/2}^+)}& \le \norm{u- \vec p_j \cdot x - q_j}_{L^\infty(B_{\kappa^j r_0/2}^+)} + \frac{\kappa^j r_0}{2} \abs{\vec p_j - \hat{\vec p}} +  \abs{q_j}\\
					\label{eq2221sun}
&\le C \kappa^j r_0\left\{\kappa^{j/2}+\int_0^{\kappa^j r_0} \frac{\tilde\omega_{\rm coef}(t)}{t}\,dt\right\} \frac{1}{r_0} \fint_{B_{4r_0}^+} u.
\end{align}

\subsection*{Step 7}
We will establish a lower bound for $\abs{\hat{\vec p}}$.
It follows from \eqref{eq2221sun} that
\begin{equation}		\label{eq1036wed}
\frac{1}{r} \norm{u-\hat{\vec p}\cdot x}_{L^\infty(B_r^+)}  \le \varrho_{\rm coef}(r) \left( \frac{1}{r_0} \fint_{B_{4r_0}^+} u \right),
\end{equation}
where
\[
\varrho_{\rm coef}(r)=C\left\{\left(\frac{2r}{\kappa r_0}\right)^{\frac12}+\int_0^{2r/\kappa} \frac{\tilde\omega_{\rm coef}(t)}{t}\,dt\right\}
\]
is a modulus of continuity.

In particular, from \eqref{eq1036wed}, we conclude that $u$ is differentiable at $0$ and that $Du(0)=\hat{\vec p}$.
Since $u=0$ on $\partial B_2^+\cap \set{x_d=0}$, it follows that
\[
Du(0)=\hat{\vec p}=(\hat{\vec p} \cdot \vec e_d)\vec e_d.
\]
Consequently, we obtain
\[
\left(\fint_{B_r^+} \abs{\hat{\vec p} \cdot x}^{\frac12}\right)^2=  \abs{\hat{\vec p}} \left(\fint_{B_r^+}  \abs{x_d}^{\frac12}\right)^2.
\]
Thus, we arrive at
\begin{equation}			\label{eq1014sat}
\abs{\hat{\vec p}} = \frac{\mu_0}{r} \left(\fint_{B_r^+} \abs{\hat{\vec p} \cdot x}^{\frac12} \right)^2,\quad\text{where }\;1/\mu_0:=\left(\fint_{B_1^+} \abs{x_d}^{\frac12}\right)^{2}>0.
\end{equation}

Using the quasi-triangle inequality \eqref{qt}, we obtain
\[
\left(\fint_{B_{\kappa^j r_0}^+} \abs{\hat{\vec p}\cdot x}^{\frac12}\right)^{2} \ge \frac12 \left(\fint_{B_{\kappa^j r_0}^+} \abs{u}^{\frac12}\right)^{2} - \left(\fint_{B_{\kappa^j r_0}^+} \abs{u-\hat{\vec p}\cdot x}^{\frac12}\right)^{2}.
\]
Furthermore, we have
\begin{align*}
\left(\fint_{B_{\kappa^j r_0}^+} \abs{u-\hat{\vec p}\cdot x}^{\frac12}\right)^{2} &\le 
2\left(\fint_{B_{\kappa^j r_0}^+} \abs{u- \vec p_j \cdot x -q_j}^{\frac12}\right)^{2} +
2\left(\fint_{B_{\kappa^j r_0}^+} \abs{(\vec p_j-\hat{\vec p}) \cdot x+q_j}^{\frac12}\right)^{2}\\
&\le 2\left(\fint_{B_{\kappa^j r_0}^+} \abs{u- \vec p_j \cdot x -q_j}^{\frac12}\right)^{2} + 8 \kappa^j r_0 \abs{\vec p_j-\hat{\vec p}} + 4 \abs{q_j}. 
\end{align*}
By \eqref{eq1150sun} and Lemma \ref{lem1548sun}, it follows that
\[
\left(\fint_{B_{\kappa^j r_0}^+} \bigabs{u(x)-\vec p_j \cdot x -q_j}^{\frac12}dx\right)^2 \le\kappa^{j}r_0 \left( \kappa^{j/2} +\tilde \omega_{\rm coef}(\kappa^j r_0) \right) \frac{1}{r_0} \fint_{B_{4r_0}^+} u.
\]
Combining the above estimates with \eqref{eq1806sun}, we deduce from \eqref{eq1014sat} that
\[
\abs{\hat{\vec p}} \ge \frac{\gamma_0}{2\kappa^j r_0}
\left(\fint_{B_{\kappa^j r_0}^+} u^{\frac12}\right)^{2} -C \left( \kappa^{j/2}+\int_0^{\kappa^j r_0} \frac{\tilde \omega_{\rm coef}(t)}{t}\,dt \right) \frac{1}{r_0} \fint_{B_{4r_0}^+} u,
\]
where we used Remark \ref{rmk1147}, and  the constant $C$ depends on $d$, $\lambda$, $\Lambda$, and $\omega_{\rm coef}$, i.e., $C=C(d, \lambda, \Lambda,\omega_{\rm coef})$.

Using Lemma \ref{lem_dp}, we further deduce
\begin{equation}			\label{eq1528sat}
\abs{\hat{\vec p}} \ge \frac{\mu_0}{4\kappa^j r_0}
\left(\fint_{B_{\kappa^j r_0}^+} u^{\frac12}\right)^{2} -C \left( \kappa^{j/2}+\int_0^{\kappa^j r_0} \frac{\tilde \omega_{\rm coef}(t)}{t}\,dt \right) \frac{1}{r_0} \fint_{B_{r_0}^+} u,
\end{equation}
where $C=C(d, \lambda, \Lambda,\omega_{\rm coef})$.

Now, we take $N$ to be the smallest integer such that in \eqref{eq1528sat}, we have
\begin{equation}			\label{eq0630mon}
C \left( \kappa^{N/2}+\int_0^{\kappa^N} \frac{\tilde \omega_{\rm coef}(t)}{t}\,dt \right) \le \frac{\nu_0 \beta_0}{128}.
\end{equation}
We note that $N$ depends only on $d$, $\lambda$, $\Lambda$, and $\omega_{\rm coef}$.

Recall that we assume $r_0 \le \frac12$. 
Hence, we derive from \eqref{eq1528sat} and \eqref{eq0630mon} that
\begin{equation}			\label{eq1538sat}
\abs{\hat{\vec p}} \ge \frac{\mu_0}{4\kappa^N r_0}
\left(\fint_{B_{\kappa^N r_0}^+} u^{\frac12}\right)^{2} - \frac{\nu_0 \beta_0}{128} \frac{1}{r_0} \fint_{B_{r_0}^+} u.
\end{equation}

We will derive the following estimate:
\begin{equation}			\label{eq1222sun}
\frac{1}{\kappa^N r_0}\left(\fint_{B_{\kappa^N r_0}^+} u^{\frac12}\right)^{2} \ge  \frac{\nu_0 \beta_0}{16\mu_0} \frac{1}{r_0} \fint_{B_{r_0}^+} u.
\end{equation}
From \eqref{eq1538sat} and \eqref{eq1222sun}, it follows that
\[
D_d u(0)=\abs{\hat{\vec p}} \ge \frac{\nu_0 \beta_0}{128}
 \frac{1}{r_0} \fint_{B_{r_0}^+} u,
\]
which confirms the desired estimate \eqref{eq2017sun}.

It remains to establish \eqref{eq1222sun} to complete the proof. By \eqref{eq1414tue}, \eqref{eq1556wed}, and \eqref{eq1014sat}, for any $\epsilon \in (0,\frac12)$, we obtain
\begin{equation}			\label{eq1216fri}
\frac{1}{\epsilon r_0}\left(\fint_{B_{\epsilon r_0}^+} v^{\frac12} \right)^2 \ge \frac{1}{\epsilon r_0} \frac{\beta_0}{r_0}\left(\frac{\nu_0}{4} \fint_{B_{r_0}^+} u \right)\left(\fint_{B_{\epsilon r_0}^+} \abs{x_d}^{\frac12} \right)^2 = \frac{\nu_0 \beta_0}{4\mu_0} \left(\frac{1}{r_0} \fint_{B_{r_0}^+} u \right).
\end{equation}
Furthermore, similar to \eqref{eq1556wed}, we have
\begin{equation}			\label{eq1110fri}
\left(\fint_{B_{\epsilon r_0}^+} u^{\frac12} \right)^2 \ge \frac12 \left(\fint_{B_{\epsilon r_0}^+} v^{\frac12} \right)^2 -\left(\fint_{B_{\epsilon r_0}^+} \abs{w}^{\frac12} \right)^2.
\end{equation}

On the other hand, from \eqref{eq2050sat}, we derive
\begin{equation}			\label{eq1213fri}
\frac{1}{\epsilon r_0}\left(\fint_{B_{\epsilon r_0}^+} \abs{w}^{\frac12} \right)^2  \le \frac{1}{\epsilon r_0}\frac{1}{\epsilon^{2d}}\left(\fint_{B_{r_0}^+} \abs{w}^{\frac12} \right)^2  \le \frac{C \omega_{\rm coef}(2r_0) }{\epsilon^{(2d+1)}} \left(\frac{1}{r_0}\fint_{B_{r_0}^+} u\right).
\end{equation}
Setting $\epsilon=\kappa^N$ in \eqref{eq1213fri} and selecting $r_4>0$ such that
\[
C \kappa^{-(2d+1)N} \omega_{\rm coef}(2r_4) \le \frac{\nu_0 \beta_0}{16\mu_0},
\]
we take $r_0 =\min(r_1, r_2, r_3, r_4, \frac12)$.
Combining \eqref{eq1110fri}, \eqref{eq1216fri}, and \eqref{eq1213fri}, we obtain
\[
\frac{1}{\kappa^N r_0}\left(\fint_{B_{\kappa^N r_0}^+} u^{\frac12} \right)^2 \ge \frac{\nu_0 \beta_0}{16\mu_0} \left(\frac{1}{r_0} \fint_{B_{r_0}^+} u \right).
\]
This completes the proof for \eqref{eq1222sun}.

\subsection*{Step 8}
%
To verify \eqref{eq_bdry_hnk}, let $\bar x$ denote its projection onto $\partial \bR^d_+$, i.e., $\bar x=(x_1,\ldots, x_{d-1},0)$.
For for $x \in B_{r_0/2}^+$, by the estimate \eqref{eq1036wed} applied at $\bar x$ in place of $0$, and the fact that $\hat{\vec p}=Du(\bar x)=D_du(\bar x )\vec e_d$, we obtain
\[
\abs{u(x)- D_du(\bar x)x_d}  \le C x_d \,\varrho_{\rm coef}(x_d) \left( \frac{1}{r_0} \fint_{B_{4r_0}^+(\bar x)} u \right) \le C x_d \, \varrho_{\rm coef}(r_0) \left( \frac{1}{r_0} \fint_{B_{5r_0}^+} u \right).
\]
Note that \eqref{eq2017sun} applied at $\bar x$ in place of $0$, we have
\[
D_d u (\bar x) \ge \frac{C}{r_0} \fint_{B_{r_0}^+(\bar x)} u \ge \frac{C}{r_0} \fint_{B_{r_0/2}^+} u,
\]
where we used $B_{r_0/2}^+ \subset B_{r_0}^+(\bar x)$ when $x \in B_{r_0/2}^+$.

Combining the above estimates via the triangle inequality and applying the doubling property of $u$ from Lemma \ref{lem_dp}, we obtain
\[
\frac{u(x)}{x_d} \ge \frac{C}{r_0} \fint_{B_{r_0/2}^+} u - C \varrho_{\rm coef}(r_0) \left( \frac{1}{r_0} \fint_{B_{5r_0}^+} u \right) \ge C(1-\varrho_{\rm coef}(r_0))\, \frac{1}{r_0} \fint_{B_{r_0/2}^+} u.
\]
By choosing $\tilde r_0$ such that $\varrho_{\rm coef}(2\tilde r_0) \le \frac12$, and ensuring that it satisfies all the previous conditions on $r_0$, namely, $\tilde r_0 \le \min(r_1, r_2, r_3, r_4,\frac12)$, we obtain \eqref{eq_bdry_hnk} by relabeling $r=r_0/2$.
\qed

\section{Proof of Theorem \ref{GreenEstimate}}		\label{sec3}
The existence of the Green's function $G(x,y)$ for the non-divergence form operator $L$ in the domain $\Omega$ is established in \cite[Theorem 7.3]{DKK25}, where it is shown that
\begin{equation}			\label{eq0900thu}
0 \le G(x,y) \le C_0 \abs{x-y}^{2-d},\quad x\neq y \in \Omega,
\end{equation}
where $C_0>0$ is a constant depending only on $d$, $\lambda$, $\Lambda$, $\omega_{\mathbf A}$, $p_0$, $\norm{\vec b}_{L^{p_0}}$, $\norm{c}_{L^{p_0/2}}$, and $\diam \Omega$.
We also note that the Green's function for the double divergence form operator $L^*$ satisfies the following symmetry relation:
\begin{equation}			\label{eq2005tue}
G(x,y)=G^*(y,x)\quad x\neq y \in \Omega.
\end{equation}

\subsection*{The upper bound}
We aim to establish the following upper bound for $G(x,y)$:
\begin{equation}		\label{eq0943wed}
G(x,y) \le \frac{C}{\abs{x-y}^{d-2}} \left(1 \wedge \frac{\mathtt{d}(y)}{\abs{x-y}}\right) \left(1 \wedge \frac{\mathtt{d}(x)}{\abs{x-y}}\right),\quad x\neq y.
\end{equation}

This estimate follows directly from the argument used in the proof of \cite[Theorem 5.4]{CDKK24}.
For completeness, we will reproduce the key steps here.

We begin by deriving the intermediate bound
\begin{equation}		\label{eq0901thu}
G(x,y) \le \frac{C}{\abs{x-y}^{d-2}} \left(1 \wedge \frac{\mathtt{d}(y)}{\abs{x-y}}\right),\quad x\neq y.
\end{equation}
It suffices to consider the case when $\mathtt{d}(y) \le \frac14 \abs{x-y}$, as otherwise, \eqref{eq0900thu} immediately yields \eqref{eq0901thu} with $C=4C_0$.
Let us define
\[
R:=\abs{x-y}
\]
and choose $y_0 \in \partial\Omega$ such that $\abs{y-y_0}=\mathtt{d}(y)$.
By the triangle inequality, for any $z \in \Omega \cap B_{R/2}(y_0)$, we have
\[
\abs{x-z} \ge \abs{x-y}-\abs{z-y_0}-\abs{y-y_0} \ge \tfrac14 \abs{x-y}.
\]

Applying \eqref{eq0900thu}, we obtain
\begin{equation}			\label{eq0905thu}
G(x,z) \le 4^{d-2}C_0 \abs{x-y}^{2-d},\quad \forall z \in \Omega \cap B_{R/2}(y_0).
\end{equation}
Furthermore, since $u=G^*(\cdot, x)$ satisfies
\[
L^* u=0 \;\text { in }\; \Omega\cap B_{R/2}(y_0),
\]
and $u(y_0)=G^*(y_0, x)=G(x, y_0)=0$, it follows from Remark \ref{rmk1203tue} that
\begin{equation}		\label{eq1718tue}
\sup_{0<s \le r} \,\frac{1}{s} \norm{u}_{L^\infty(B_s(y_0)\cap \Omega)} \le \frac{C}{r} \fint_{B_{2r}(y_0)\cap \Omega} u,
\end{equation}
where $r=\frac14\abs{x-y} \wedge r_0$ and $r_0>0$ depends only on $d$, $\lambda$, $\Lambda$, $\omega_{\rm coef}$, and $\Omega$.

We may assume that $\mathtt{d}(y) \le r_0$, since otherwise, we have $r_0 < \mathtt{d}(y) \le \frac14 \abs{x-y}$, and \eqref{eq0901thu} follows directly from \eqref{eq0900thu} with $C=C_0 \diam(\Omega)/r_0$.

From \eqref{eq1718tue}, \eqref{eq2005tue}, \eqref{eq0905thu}, and the assumption
\[
\mathtt{d}(y) \le r \le \tfrac14 \abs{x-y}=\tfrac{1}{4}R,
\]
it follows that
\[
\frac{1}{\mathtt{d}(y)} G(x,y) \le  \frac{C}{r} \fint_{\Omega \cap B_{2r}(y_0)} G(x,z)\,dz \le \frac{C}{r}\, \abs{x-y}^{2-d}.
\]
Recalling that $r=\frac14\abs{x-y} \wedge r_0$, we observe the right-hand side is bounded by $C \abs{x-y}^{1-d}$ when $\abs{x-y} \le 4 r_0$.
If $\abs{x-y} > 4r_0$, then the right-hand side is bounded by $C(\diam \Omega/r_0) \,\abs{x-y}^{1-d}$.
Thus, the proof of \eqref{eq0901thu} is complete in both cases.

Now, we derive the upper bound in \eqref{eq1850mon} from \eqref{eq0901thu}.
As before, it suffices to consider the case where $\mathtt{d}(x)<\frac{1}{4} \abs{x-y}=\frac{1}{4}R$.
Define $v=G(\,\cdot, y)$ and choose $x_0 \in \partial\Omega$ such that $\abs{x-x_0}=\mathtt{d}(x)$.
Since $v$ satisfies
\[
L v=0 \;\text { in }\; \Omega\cap B_{R/2}(x_0), \quad v=0 \;\text{ on }\; \partial \Omega\cap B_{R/2}(x_0),
\]
and the  gradient estimate via the standard $L^p$ estimate gives
\begin{equation}		\label{eq0902thu}
\abs{v(x)}=\abs{v(x)-v(x_0)} \le  C\mathtt{d}(x) R^{-1} \norm{v}_{L^\infty(\Omega\cap B_{R/2}(x_0))}.
\end{equation}

By the triangle inequality, for $z \in \Omega \cap B_{R/2}(x_0)$, we have
\[
\tfrac{1}{4} \abs{x-y} \le \abs{z-y} \le \tfrac{7}{4} \abs{x-y}.
\]
Using this together with \eqref{eq0902thu} and \eqref{eq0901thu}, we obtain
\[
G(x,y) \le \frac{C \mathtt{d}(x)}{\abs{x-y}^{d-1}}\left(1 \wedge \frac{\mathtt{d}(y)}{\abs{x-y}}\right).
\]
Thus, the upper bound \eqref{eq0943wed} is established.

\subsection*{The lower bound}
To proceed, we apply the Harnack inequality for nonnegative solutions of $L^*u=0$, as established in \cite[Theorem 4.3]{GK24}, along with the Harnack inequality for nonnegative solutions of $Lu=0$.
For the case $c=0$, we invoke \cite[Theorem 3.1]{S1}.
For general $c \le 0$, we apply \cite[Proposition 3.3]{DKK25} to reduce the problem to the case $c=0$; see \cite{DKK25} for details. 

Additionally, we use the following results:

\begin{enumerate}[label=(\roman*)]
\item		\label{item1}
There exists constant $r_0>0$ and $c_0 \ge 1$, depending only on the $C^{1,\alpha}$ characteristic of $\partial \Omega$, such that for any $r \in (0,r_0]$ and $x \in \Omega$ with $\mathtt{d}(x)<r$, there exists $\tilde x \in \Omega$ satisfying
\[
\abs{x-\tilde x} < c_0r\quad\mbox{and}\quad \mathtt{d}(\tilde x) \ge r.
\]

\item (Harnack chain condition)		\label{hc}
There exist constants $M>0$ and $r_0>0$ such that if $\epsilon>0$, $r \in (0,\frac{1}{4} r_0)$, $x_0 \in \partial \Omega$, and $x_1$, $x_2 \in B_r(x_0) \cap \Omega$ with $\abs{x_1-x_2} \le 2^k \epsilon$ and $\mathtt{d}(x_i) \ge \epsilon$ for $i=1,2$, there exists a chain of $Mk$  balls $B_1,\ldots, B_{Mk}$ in $\Omega$ connecting $x_1$ to $x_2$ ($x_1 \in B_1$, $x_2 \in B_{Mk}$) satisfying
\[
\diam B_j \simeq \dist(B_j, \partial\Omega),\quad  C \diam B_j  \ge  \min \{ \dist(x_1, B_j), \dist(x_2, B_j)\}
\]
for some $C>1$.

\item		\label{item2}
By \eqref{eq_bdry_hnk} and the flattening method described at the beginning of Section \ref{sec2}, we conclude that there exist constants $r_0>0$ and $C>0$, depending only on $d$, $\lambda$, $\Lambda$,  $\omega_{\mathbf A}$, $p_0$, $\norm{\vec b}_{L^{p_0}}$, $\norm{c}_{L^{p_0/2}}$, and the $C^{1,\alpha}$ characteristic of $\partial\Omega$, such that
\begin{equation}			\label{eq1100thu}
\frac{u(z)}{\mathtt{d}(z)} \ge \frac{C}{r}  \fint_{B_r(y_0)\cap \Omega} u, \quad \forall z \in B_r(y_0)\cap\Omega,\;\; r \in (0, \tfrac{1}{2}r_0],
\end{equation}
whenever and $u$ is a nonnegative solution of $L^*u=0$ in $B_{r_0}(y_0)\cap \Omega$ with $u=0$ on $B_{r_0}(y_0)\cap \partial\Omega$ for some $y_0 \in \partial\Omega$.

\item		\label{item3}
Similar to \ref{item2}, there exist constants $r_0>0$ and $C>0$ that depend only on $d$, $\lambda$, $\Lambda$,  $p_0$, $\norm{\vec b}_{L^{p_0}}$, $\norm{c}_{L^{p_0/2}}$, and the $C^{1,\alpha}$ characteristic of $\partial\Omega$, such that \eqref{eq1100thu} holds for all nonnegative solutions of $Lu=0$ in $B_{r_0}(y_0)\cap \Omega$ with $u=0$ on $B_{r_0}(y_0)\cap \partial\Omega$ for some $y_0 \in \partial\Omega$.
See, for instance, \cite{N1}.

\item
The constants $r_0$ in \ref{item1} -- \ref{item3} are not necessarily the same.
However, we take the smallest of the three and denote it by $r_0$.
\end{enumerate}

\medskip

Our goal is to establish the following lower bound:
\begin{equation}		\label{eq0943thu}
G(x,y) \ge \frac{C}{\abs{x-y}^{d-2}} \left(1 \wedge \frac{\mathtt{d}(y)}{\abs{x-y}}\right) \left(1 \wedge \frac{\mathtt{d}(x)}{\abs{x-y}}\right),\quad x\neq y.
\end{equation}

\medskip
\noindent
\emph{Case 1: }$\abs{x-y} \le \frac{1}{3} \mathtt{d}(y)$.
\smallskip

Setting $r=\frac14 \abs{x-y}$, we observe that $B_{5r}(x) \subset \Omega$.
Consider a nonnegative function $\eta \in C^\infty_c(B_{4r}(x))$ such that $\eta=1$ in $B_{3r}(x)$, with $\norm{D\eta}_{L^\infty} \le 2/r$, $\norm{D^2 \eta}_{L^\infty} \le 4/r^2$.
Using $c\le 0$, H\"older's inequality, and the assumption that $p_0>d$, we obtain
\begin{align*}
1&=\eta(x)=\int_{\Omega} G(x, \cdot\,) L \eta \le \int_{B_{4r}(x)\setminus B_{3r}(x)} G^*(\,\cdot,x) \left(a^{ij}D_{ij}\eta + b^i D_i \eta \right)\\
&\le \sup_{B_{4r}\setminus B_{3r}} G^*(\,\cdot,x) \left( \frac{C}{r^2}\, \abs{B_{4r}} +\frac{C}{r} \,\norm{\vec b}_{L^{p_0}} \abs{B_{4r}}^{1-1/p_0}\right)\\
& \le C r^{d-2} \sup_{B_{4r}\setminus B_{3r}} G^*(\,\cdot, x),
\end{align*}
where $C$ depends on $d$, $p_0$, and $\diam \Omega$.

Since any two points in $B_{4r}(x)\setminus B_{3r}(x)$ can be connected by a chain of at most $N(d)$ balls of radius $r$, all contained in $B_{5r}(x)\setminus B_{2r}(x) \subset \Omega$, applying the Harnack inequality iteratively for $G^*(\,\cdot, x)$ gives
\[
G(x,y)= G^*(y,x) \ge C \sup_{B_{4r}\setminus B_{3r}} G^*(\,\cdot, x).
\]

Thus, we have
\begin{equation}			\label{eq2100thu}
G(x,y) \ge C \abs{x-y}^{2-d},
\end{equation}
where $C$ depends on $d$, $\lambda$, $\Lambda$, $p_0$, $\norm{\vec b}_{L^{p_0}}$, $\norm{c}_{L^{p_0/2}}$, and $\omega_{\mathbf A}$.
Since
\[
\mathtt{d}(x) \ge \mathtt{d}(y)-\abs{x-y}\ge 2\abs{x-y},
\]
we establish \eqref{eq0943thu}.

\medskip
\noindent
\emph{Case 2: }$\frac{1}{3} \mathtt{d}(y)<\abs{x-y} < 7 c_0 \mathtt{d}(y)$.
\smallskip

Take $r= \abs{x-y} \wedge r_0$ and choose $x_1 \in \Omega$ such that $\abs{y-x_1}=(1/21c_0)r$.
Note that 
\[
\mathtt{d}(y)> \frac{1}{7c_0}r, \qquad \mathtt{d}(x_1) \ge \mathtt{d}(y)-\frac{1}{21c_0} r \ge \frac{2}{21c_0} r.
\]
Applying \eqref{eq2100thu} with $x_1$ in place of $x$, we obtain
\begin{equation}			\label{eq2200thu}
G(x_1,y) \ge C r^{2-d}.
\end{equation}

If $\mathtt{d}(x) < (1/3c_0)r$, using \ref{item1}, we choose $\tilde x \in\Omega$ such that
\[
\mathtt{d}(\tilde x) \ge \frac{1}{3c_0} r\quad \text{and}\quad \abs{x-\tilde x}<\frac{1}{3}r.
\]
If $\mathtt{d}(x) \ge (1/3c_0)r$, we simply take $\tilde x=x$.
In either case, observe that
\[
\abs{x_1-\tilde x} \le \abs{x-y}+\abs{y-x_1}+\abs{x-\tilde x} < \abs{x-y}+r.
\]
Thus, when $r=\abs{x-y}$, we obtain $\abs{x_1-\tilde x} < 2r$.
On the other hand, if $r=r_0$, we have
\[
\abs{x_1-\tilde x} \le (\diam\Omega/r_0)\, r.
\]

Using \ref{hc}, we apply the Harnack inequality iteratively to $G(\,\cdot, y)$ to compare $G(x_1,y)$ with $G(\tilde x,y)$.
Using \eqref{eq2200thu}, we obtain
\begin{equation}			\label{eq1749sun}
G(\tilde x, y) \ge C G(x_1,y) \ge C r^{2-d},
\end{equation}
where the constant $C>0$ depends only on $d$, $\lambda$, $\Lambda$, $p_0$, $\norm{\vec b}_{L^{p_0}}$, $\norm{c}_{L^{p_0/2}}$, $\omega_{\mathbf A}$,  $\diam \Omega$, and the $C^{1,\alpha}$ characteristic of $\partial \Omega$.

Next, we derive an estimate to compare $G(\tilde x, y)$ with $G(x, y)$.
If $\mathtt{d}(x) \ge (1/3c_0)r$, then we have $\tilde x=x$.
Recalling that $r=\abs{x-y}\wedge r_0$, it follows from \eqref{eq1749sun} that
\[
G(x,y) =G(\tilde x,y) \ge C \abs{x-y}^{2-d}.
\]
This immediately yields \eqref{eq0943thu}.

If $\mathtt{d}(x)<(1/3c_0)r$, we choose $x_0 \in \partial\Omega$ such that $\abs{x-x_0}=\mathtt{d}(x)$.
Since
\[
2 B_{r/6c_0}(\tilde x)=B_{r/3c_0}(\tilde x) \subset B_r(x_0) \cap \Omega,
\]
applying \ref{item3} to $G(\,\cdot, y)$ and using the Harnack inequality for $G(\,\cdot, y)$, we obtain
\begin{equation}			\label{eq1242sun}
G(x,y) \ge \frac{C \mathtt{d}(x)}{r} \fint_{B_{r/6c_0}(\tilde x)} G(z,y)\,dz \ge \frac{C \mathtt{d}(x)}{r}  G(\tilde x, y).
\end{equation}
Combining \eqref{eq1242sun} with \eqref{eq1749sun} and recalling that $r=\abs{x-y}\wedge r_0$, we obtain \eqref{eq0943thu}.

\medskip
\noindent
\emph{Case 3: }$\mathtt{d}(y) \le \abs{x-y}/7c_0$.
\smallskip
 
We first consider the case when $\mathtt{d}(y) \ge (1/7c_0) r_0$.
Choose points  $x_1$ such that
\[
\abs{y-x_1}=(1/21c_0) r_0.
\]
If $\mathtt{d}(x) < (1/3c_0)r_0$, then using \ref{item1}, choose $\tilde x \in \Omega$ such that
\[
\mathtt{d}(\tilde x) \ge (1/3c_0)r_0 ,\quad \abs{x-\tilde x}< r_0/3.
\]
If $\mathtt{d}(x) \ge (1/3c_0)r_0$, we take $\tilde x=x$.
Note that the following estimates hold:
\[
\mathtt{d}(x_1) \ge (2/21c_0) r_0, \quad G(x_1,y) \ge C r_0^{2-d}, \quad \abs{x_1-\tilde x} \le (\diam \Omega/r_0)\,r_0,
\]
Observing that $\mathtt{d}(x_1)\ge (2/21c_0) r_0$ and utilizing \ref{hc}, we obtain, similar to \eqref{eq1749sun},
\begin{equation}			\label{eq2117tue}
G(\tilde x, y) \ge C G(x_1, y) \ge  C r_0^{2-d}.
\end{equation}
As before, we need to compare $G(\tilde x, y)$ with $G(x, y)$.
If $\mathtt{d}(x) \ge r_0/3c_0$, then $\tilde x=x$.
Using the fact that $r_0 \le 7c_0 \mathtt{d}(y) \le \abs{x-y}$, we obtain from \eqref{eq2117tue}
\[
G(x,y) =G(\tilde x,y) \ge C \abs{x-y}^{2-d}.
\]
This immediately yields \eqref{eq0943thu}.

If $\mathtt{d}(x)<(1/3c_0)r_0$, we choose $x_0 \in \partial\Omega$ such that $\abs{x-x_0}=\mathtt{d}(x)$.
Since
\[
2B_{r_0/6c_0}(\tilde x)=B_{r_0/3c_0}(\tilde x) \subset B_{r_0}(x_0) \cap \Omega,
\]
applying \ref{item3} to $G(\,\cdot, y)$ and using the Harnack inequality for $G(\,\cdot, y)$, we obtain
\begin{equation}			\label{eq2111tue}
G(x,y) \ge \frac{C \mathtt{d}(x)}{r_0} \fint_{B_{r_0/6c_0}(\tilde x)} G(z,y)\,dz \ge \frac{C \mathtt{d}(x)}{r_0}  G(\tilde x, y).
\end{equation}
Combining \eqref{eq2117tue} and \eqref{eq2111tue}, and using $r_0 \le 7c_0 \mathtt{d}(y) \le \abs{x-y}$, we obtain
\[
G(x,y) \ge C \mathtt{d}(x) r_0^{1-d} \ge \frac{C \mathtt{d}(x)}{\abs{x-y}^{d-1}}.
\]
Obviously, this implies \eqref{eq0943thu}.

\smallskip
Now, we assume that $\mathtt{d}(y) < (1/7c_0)r_0$.
Let $y_0 \in \partial \Omega$ be such that $\abs{y-y_0}=\mathtt{d}(y)$.
Define $r=\abs{x-y} \wedge r_0$.
Since $\mathtt{d}(y) < (1/7c_0)r \le r/7$, we apply \ref{item2} to $u=G^*(\,\cdot, x)$, obtaining
\begin{equation}			\label{eq1105thu}
\frac{G^*(y,x)}{\mathtt{d}(y)} \ge \frac{C}{r} \fint_{B_{3r/7}(y_0)\cap \Omega} G^*(z,x)\,dz.
\end{equation}

Using \ref{item1}, we choose $\tilde y \in\Omega$ such that
\[
\mathtt{d}(\tilde y) \ge (1/7c_0) r\quad \text{and}\quad \abs{y-\tilde y}<r/7.
\]
Since
\[
2B_{r/14c_0}(\tilde y) =B_{r/7c_0}(\tilde y) \subset B_{3r/7}(y_0)\cap \Omega,
\]
from \eqref{eq1105thu}, the Harnack inequality for $G^*(\,\cdot,y)$, and \eqref{eq2005tue}, we obtain
\begin{equation}			\label{eq1634fri}
G(x,y) \ge \frac{C \mathtt{d}(y)}{r} \fint_{B_{r/14c_0}(\tilde y)} G^*(z,x)\,dz \ge \frac{C \mathtt{d}(y)}{r}  G(x, \tilde y).
\end{equation}
Choose a point $x_1 \in \Omega$ such that $\abs{x_1-\tilde y}=(1/21c_0)r$.
From \eqref{eq2100thu}, it follows that
\begin{equation}			\label{eq2148fri}
G(x_1,\tilde y) \ge C r^{2-d}.
\end{equation}

If $\mathtt{d}(x) < (1/7c_0)r$, using \ref{item1}, we choose $\tilde x \in\Omega$ such that
\[
\mathtt{d}(\tilde x) \ge (1/7c_0)r\quad \text{and} \quad \abs{x-\tilde x}<r/7.
\]
If $\mathtt{d}(x) \ge (1/7c_0)r$, we simply take $\tilde x=x$.
Observe that
\[
\abs{x_1-\tilde x} \le \abs{x-y}+\abs{y-\tilde y}+\abs{\tilde y-x_1}+\abs{x-\tilde x} < \abs{x-y}+r.
\]
Thus, when $r=\abs{x-y}$, we obtain $\abs{x_1-\tilde x} \le 2r$.
On the other hand, if $r=r_0$, we have
\[
\abs{x_1-\tilde x} \le (\diam\Omega/r_0)\, r.
\]

Since $\mathtt{d}(x_1)\ge (2/21c_0)r$, applying the Harnack inequality to $G(\,\cdot, \tilde y)$ and utilizing \ref{hc}, we obtain from \eqref{eq2148fri}, as before,
\[
G(\tilde x, \tilde y) \ge C G(x_1, \tilde y) \ge C r^{2-d}.
\]
Similar to \eqref{eq1242sun}, we compare $G(\tilde x, \tilde y)$ with $G(x,\tilde y)$ as follows:
\[
G(x,\tilde y) \ge \frac{C \mathtt{d}(x)}{r} \fint_{B_{r/9c_0}(\tilde x)} G(z, \tilde y)\,dz \ge \frac{C \mathtt{d}(x)}{r}  G(\tilde x, \tilde y).
\]

Combining the previous two estimates with \eqref{eq1634fri}, we obtain \eqref{eq0943thu}.

\medskip

Having verified all cases, the proof of \eqref{eq0943thu} is complete.
\qed

\section{Counterexample}				\label{counter_ex}
We aim to construct a counterexample in $\bR^2$ where the coefficient matrix $\mathbf A=(a_{ij})$ does not belong to the DMO class.
Specifically, we seek a radial function $\eta=\eta(r)$ satisfying
\[
\eta(r)=\frac{1}{\log r}(1+o(1))\quad \text{as }\; r\to 0+,
\]
such that the function
\[
u(x)=u(x_1,x_2):=-\frac{x_2}{\log \,\abs{x}}
\]
is solution to $L_0^* u=D_{ij}(a^{ij}u)=0$ in $B_{\varepsilon}^+=B_{\varepsilon}^+(0)$ for some $\varepsilon \in (0,\frac{1}{2})$, where
\[
a_{ij}(x)=\delta_{ij}+ \frac{x_ix_j}{\abs{x}^2}\, \eta(\abs{x}).
\]

It is straightforward to verify that $a_{12}(x)$ is not a DMO function.
Additionally, the coefficients matrix $\mathbf A$ satisfies condition \eqref{ellipticity-nd} in $B_{\varepsilon}^+$ provided that
\begin{equation}	\label{eq1141mon}
\sup_{0<r< \varepsilon} \,\abs{\eta(r)} <1.
\end{equation}
Moreover, it is easy to see that $u \in C(\overline{B_\varepsilon^+})$ and that $u=0$ on $T_\varepsilon=B_\varepsilon^+ \cap \{x_2=0\}$.
However, since
\[
\frac{\partial u}{\partial x_2}=-\frac{1}{\log \abs{x}}+\frac{x_2^2}{(\log \abs{x})^2 \abs{x}^2},
\]
we find that $\partial u/\partial x_2(0)=0$, which contradicts the Hopf lemma at $0$.

A direct computation shows that $u$ is a solution of $L_0^*u=0$ in $B_\varepsilon^+$ if and only if $\eta(r)$ satisfies the following ODE in $(0, \varepsilon)$:
\begin{equation}			\label{eq1238thu}
r^2 \eta''(r)+2r\left(2-\frac{1}{\log r}\right) \eta'(r)+\left(\frac{2}{(\log r)^2}-\frac{3}{\log r}+2\right)\eta(r)-\frac{2}{\log r}+\frac{2}{(\log r)^2}=0.
\end{equation}

Using the change of variables $t=-\log r$ and setting
\[
y(t)=y(-\log r)=\eta(r),
\]
we obtain the transformed equation:
\begin{equation}			\label{eq1135thu}
y''(t)-\left(3+\frac{2}{t}\right)y'(t)+\left(2+\frac{3}{t}+\frac{1}{t^2}\right)y(t)+\frac{2}{t}+\frac{2}{t^2}=0,\quad   t\in (-\log \varepsilon, \infty).
\end{equation}
Rearranging, we rewrite the equation as
\[
y''(t)-3y'(t)+2y(t)=\frac{2}{t}\,y'(t)-\left(\frac{3}{t}+\frac{1}{t^2}\right)y(t)-\frac{2}{t}-\frac{2}{t^2}.
\]

To find a solution $y(t)$, we introduce a function $f(t)$ belonging to a suitable function class $X$ and apply a fixed point method to solve the following modified equation:
\begin{equation}		\label{eq0853thu}
y''(t)-3y'(t)+2y(t)=\frac{2}{t}\,f'(t)-\left(\frac{3}{t}+\frac{1}{t^2}\right)f(t)-\frac{2}{t}-\frac{2}{t^2}.
\end{equation}

Using the standard method for solving second-order linear ODEs, we obtain the following representation for all solutions:
\begin{multline*}
y(t)=-e^t \int^t e^{-s} \left\{\frac{2}{s}\,f'(s)-\left(\frac{3}{s}+\frac{1}{s^2}\right)f(s)-\frac{2}{s}-\frac{2}{s^2} \right\}\,ds\\
+e^{2t} \int^t e^{-2s}\left\{\frac{2}{s}\,f'(s)-\left(\frac{3}{s}+\frac{1}{s^2}\right)f(s)-\frac{2}{s}-\frac{2}{s^2} \right\}\,ds. 
\end{multline*}

Integrating by parts, we obtain
\[
y(t)=e^t \int^t e^{-s} \left\{\left(\frac{1}{s}-\frac{1}{s^2}\right)f(s)+\frac{2}{s}+\frac{2}{s^2}\right\} \,ds
+e^{2t} \int^t  e^{-2s}\left\{\left(\frac{1}{s}+\frac{1}{s^2}\right)\,f(s)-\frac{2}{s}-\frac{2}{s^2} \right\}\,ds. 
\]

This leads us to consider the mapping $f \mapsto Tf$, where
\[
Tf(t)=- \int_t^{\infty} e^{t-s} \left\{\left(\frac{1}{s}-\frac{1}{s^2}\right)f(s)+\frac{2}{s}+\frac{2}{s^2}\right\} + e^{2t-2s}\left\{\left(\frac{1}{s}+\frac{1}{s^2}\right)\,f(s)-\frac{2}{s}-\frac{2}{s^2} \right\}\,ds.
\]
It is clear that $Tf(t)$ is a solution of \eqref{eq0853thu}.
Define the Banach space
\[
X=\left\{f \in C(I): \norm{f}_X=\sup_{t \in I} t\abs{f(t)}<\infty\right\}, \quad I:= (-\log \varepsilon, \infty).
\]
We will show that $T$ is a contraction mapping in $X$.
Indeed, we have
\begin{align*}
t \Abs{Tf(t)-Tg(t)} &\le t \int_t^{\infty} \Abs{\,e^{t-s} \left(\frac{1}{s}-\frac{1}{s^2}\right)+ e^{2t-2s} \left(\frac{1}{s}+\frac{1}{s^2}\right)\,} \frac{1}{s} \norm{f-g}_X \,ds\\
&\le t \norm{f-g}_X \left\{ \int_t^{4t/3}\frac{2}{s} \left(\frac{1}{s}+\frac{1}{s^2}\right) \,ds +e^{-t/3} \int_{4t/3}^\infty \frac{2}{s} \left(\frac{1}{s}+\frac{1}{s^2}\right) \,ds\right\}\\
&\le\norm{f-g}_X \left\{ \left(\frac{1}{2}+\frac{7}{16t}\right) +e^{-t/3}\left(\frac{3}{2}+\frac{9}{16t}\right)\right\}.
\end{align*}

Thus, $T$ is a contraction mapping, provided that $\varepsilon \in (0,\frac{1}{2})$ is sufficiently small.
By starting with $f(t)=1/t$, we observe that $T^n f$ converges to some function $y$ in $X$.
Furthermore, by L'Hospital's rule,
\[
y(t)=\frac{1}{t} (-1+o(1))\quad\text{as }\;t \to \infty.
\]
Indeed,
\begin{align*}
\lim_{t\to \infty}\frac {-\int_t^{\infty} e^{t-s} \left\{\left(\frac{1}{s}-\frac{1}{s^2}\right)y(s)+\frac{2}{s}+\frac{2}{s^2}\right\}\,ds}{\frac{1}{t}}
&=\lim_{t\to \infty}\frac {-\int_t^{\infty} e^{-s} \left\{\left(\frac{1}{s}-\frac{1}{s^2}\right)y(s)+\frac{2}{s}+\frac{2}{s^2}\right\}\,ds}{e^{-t}\frac{1}{t}}\\
&=\lim_{t\to \infty}\frac {e^{-t} \left\{\left(\frac{1}{t}-\frac{1}{t^2}\right)y(t)+\frac{2}{t}+\frac{2}{t^2}\right\}}{-e^{-t}\left(\frac{1}{t}+\frac{1}{t^2}\right)}=-2
\end{align*}
and similarly
\[
\lim_{t\to +\infty}-t\int_t^{\infty}  e^{2t-2s}\left\{\left(\frac{1}{s}+\frac{1}{s^2}\right)\,f(s)-\frac{2}{s}-\frac{2}{s^2} \right\}\,ds=1.
\]
Clearly, $y$ satisfies \eqref{eq1135thu}, which implies that $\eta(r)=y(-\log r)$ is a solution to \eqref{eq1238thu}.
Moreover, since
\[
\sup_{-\log \epsilon<t< \infty} \abs{y(t)}= \sup_{0<r<\epsilon} \abs{\eta(r)},
\]
it follows that \eqref{eq1141mon} is satisfied by choosing $\varepsilon$ sufficiently small.
\qed


\end{document}